\documentclass[10pt,headsepline, a4paper,cleardoubleempty]{scrartcl}

\usepackage[utf8]{inputenc}
\usepackage[intlimits]{amsmath}
\usepackage{amsfonts}
\usepackage{amssymb}
\usepackage[amsmath, thmmarks]{ntheorem}
\usepackage{enumerate}

\newcommand{\R}{\mathbb{R}}
\newcommand{\Z}{\mathbb{Z}}
\newcommand{\Q}{\mathbb{Q}}
\newcommand{\N}{\mathbb{N}}
\newcommand{\C}{\mathbb{C}}
\renewcommand{\P}{\mathbb{P}}
\newcommand{\transpose}{^{\operatorname{t}}}
\renewcommand{\sl}{\operatorname{SL}(2,\Z)}
\newcommand{\slR}{\operatorname{SL}(2,\R)}
\newcommand{\ii}{\mathrm{i}}
\renewcommand{\Im}{\operatorname{Im}}

\newcommand{\Norm}{\operatorname{N}}
\newcommand{\ord}{\operatorname{ord}}

\makeatother
\newtheorem{vorlage}{}[section]
\newtheorem{prop}[vorlage]{Proposition}
\newtheorem{lemma}[vorlage]{Lemma}
\newtheorem{cor}[vorlage]{Corollary}

\newtheorem{theorem}[vorlage]{Theorem}
\theorembodyfont{\rmfamily}
\newtheorem{definition}[vorlage]{Definition}
\newtheorem{rem}[vorlage]{Remark}

\theoremstyle{nonumberbreak}
\theoremsymbol{\ensuremath{\square}}
\newtheorem{proof}{Proof}

\setcounter{MaxMatrixCols}{26}

\title{Calculating the Fourier Coefficients of Jacobi--Eisenstein series}
\author{Martin Woitalla}

\begin{document}

\maketitle

\begin{abstract}
In this text we generalize the classical Jacobi Eisenstein series as they were
discussed by Eichler and Zagier to arbitrary lattices. We use two 
different methods to derive the general Fourier expansion. The last two sections give formulas in the case where the lattice is an even unimodular lattice or $L$ is of the type $NA_{1}$.
\end{abstract}
\section{Jacobi-Eisenstein series and Jacobi group}
Let $L$ be an even integral lattice of rank $N$ where we indentify $L$
with $\Z^{N}$ together with the bilinear form $(\cdot,\cdot)_{L}$ which
has gram matrix $S$ with respect to the standard basis of $\Z^{N}$ which we
denote by $\mathfrak{e}_{1},\dots,\mathfrak{e}_{N}$. So $S$
is an even and positive definite matrix. We will also write $(\cdot,\cdot)_{S}$
for the bilinear form and will drop the subscript $S$, when it is obvious which
lattice is involved.
\vspace{5mm}

\noindent We define
\[L^{\vee}=\left\{\lambda\in L\otimes
\Q\,|\,(\lambda,l)\in\Z\,\,\text{for all}\,\, l\in L\right\}\,.\]
The set $L^{\vee}$ is a lattice again and is called the \textit{dual lattice}
of $L$. Since $L$ is integral we have $L\subseteq
L^{\vee}$.The \textit{discriminant
group} of the lattice is given by $L^{\vee}/L$ and is a finite abelian group of
order $\det(S)$. Associated to the lattice is the quadratic form
\[L^{\vee}/L\times L^{\vee}/L\to\Q/2\Z\quad,\quad
(\lambda+L,\lambda+L):=(\lambda,\lambda)+2\Z\]
which is called the \textit{discriminant form} of the lattice $L$.
The corresponding bilinear form is given by 
\[L^{\vee}/L\times L^{\vee}/L\to\Q/\Z\quad,\quad
(\lambda+L,\mu+L):=(\lambda,\mu)+\Z\,.\]
The real Heisenberg group
$H(L\otimes\R)$ is the group which is generated by the elements
\[[x,y:r]:=\begin{pmatrix}
            1&0&y\transpose S&\frac{1}{2}(x,y)-r&\frac{1}{2}(y,y)\\
	    0&1&x\transpose S&\frac{1}{2}(x,x)&\frac{1}{2}(x,y)+r\\
	    0&0&I_{N}&x&y\\
	    0&0&0&1&0\\
	    0&0&0&0&1
           \end{pmatrix}\]
where $x,y\in\R^{N}$ and $r\in\R$. The group law of $H(L\otimes\R)$ is 
\[[x,y:r]\cdot
[x',y':r']=[x+x',y+y',r+r'+\frac{1}{2}((x,y')-(x',y))]\,.\]
We observe that the projection
\[H(L\otimes\R)\to (L\otimes \R)\times (L\otimes \R)\quad,\quad
[x,y,r]\mapsto (x,y)\]
has kernel $\left\{[0,0:r];r\in\R\right\}$ which equals the centre of
$H(L\otimes\R)$. Therefore the Heisenberg group is the central extension of
$\R$ with $(L\otimes \R)\times (L\otimes \R)$ 
\[0\to \R\to H(L\otimes\R)\to(L\otimes \R)\times (L\otimes \R)\to 0\,.\]
We have an action of $\slR$ on the Heisenberg group which is given by
\[\slR\circlearrowright H(L\otimes\R)\quad,\quad
A.[x,y:r]:=\{A\}[x,y:r]\{A^{-1}\}=[dx-cy,-bx+ay:r]\,.\]
As usual we denote the 1-sphere by
\[S^{1}=\left\{\zeta\in\C;|\zeta|=1\right\}\,.\]
Following Eichler Zagier we consider the group 
\[C_{\Z}=\left\{[0,0:r];r\in\Z\right\}\]
which is a subgroup of the centre of $H(L\otimes\R)$. This gives us the
following description of the factor group 
\begin{lemma}\label{lemma:central extension factor group}
 There is a central extension of $H(L\otimes\R)/C_{\Z}$ given by
\[1\to S^{1}\to H(L\otimes\R)/C_{\Z}\to (L\otimes \R)\times (L\otimes \R)\to
0\,.\]
\end{lemma}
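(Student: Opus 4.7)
The plan is to deduce the claimed sequence from the central extension
\[0\to\R\to H(L\otimes\R)\to(L\otimes\R)\times(L\otimes\R)\to 0\]
already established in the paragraph preceding the lemma, by quotienting every term on the left by the discrete central subgroup $C_{\Z}$.

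First I would check that the quotient $H(L\otimes\R)/C_{\Z}$ is well-defined as a group. Because $C_{\Z}\subset\{[0,0:r]:r\in\R\}$ and the latter is, as noted, the centre of $H(L\otimes\R)$, the subgroup $C_{\Z}$ is a fortiori normal, so the quotient carries a canonical group structure and the natural projection $H(L\otimes\R)\to H(L\otimes\R)/C_{\Z}$ is a group homomorphism. Moreover, since the image of $C_{\Z}$ in $(L\otimes\R)\times(L\otimes\R)$ is trivial, the surjection in the original sequence factors through a surjection
\[\pi\colon H(L\otimes\R)/C_{\Z}\twoheadrightarrow(L\otimes\R)\times(L\otimes\R),\qquad [x,y:r]C_{\Z}\mapsto(x,y).\]

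Next I would identify the kernel of $\pi$. An element $[x,y:r]C_{\Z}$ lies in $\ker\pi$ iff $(x,y)=(0,0)$, i.e.\ iff it is represented by some $[0,0:r]$ with $r\in\R$. Hence $\ker\pi=\{[0,0:r]:r\in\R\}/C_{\Z}\cong\R/\Z$, and the isomorphism $\R/\Z\xrightarrow{\sim}S^{1}$, $r+\Z\mapsto e^{2\pi\ii r}$, supplies the copy of $S^{1}$ sitting inside $H(L\otimes\R)/C_{\Z}$. Centrality of this $S^{1}$ in the quotient is inherited from centrality of $\{[0,0:r]:r\in\R\}$ in $H(L\otimes\R)$: the image of a central subgroup under a surjective homomorphism is central.

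Putting these pieces together yields the exact sequence
\[1\to S^{1}\to H(L\otimes\R)/C_{\Z}\to(L\otimes\R)\times(L\otimes\R)\to 0\]
with $S^{1}$ landing in the centre, which is precisely the claim. There is no real obstacle here; the only point worth a line of care is that $C_{\Z}$ actually lies in the centre (so the quotient is a group and the pushed-down $S^{1}$ is central), which is immediate from the explicit multiplication law $[x,y:r]\cdot[x',y':r']=[x+x',y+y',r+r'+\tfrac12((x,y')-(x',y))]$ recalled above.
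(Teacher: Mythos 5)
Your proof is correct and follows essentially the same route as the paper: both identify the kernel of the induced projection with $\{[0,0:r]:r\in\R\}/C_{\Z}\cong\R/\Z\cong S^{1}$ via $r\mapsto e^{2\pi\ii r}$ and observe that centrality is inherited from the centre of $H(L\otimes\R)$. The paper phrases this by directly defining the embedding $\iota:S^{1}\to H(L\otimes\R)/C_{\Z}$, $e^{2\pi\ii\kappa}\mapsto[0,0:\kappa]$, but the content is identical to your quotient argument.
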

\begin{proof}
We define an embedding $\iota:S^{1}\to
H(L\otimes\R)/C_{\Z}\,,\,e^{2\pi\ii\kappa}\mapsto [0,0:\kappa]$. This is
obviously well-defined and injective and since $\iota(S^{1})$ is contained  in
the centre of the Heisenberg group we also have $\iota(S^{1})\subseteq
C\left(H(L\otimes\R)/C_{\Z}\right)$. If we define the projection as above we
obtain that the sequence is exact. 
\end{proof}
The factor group inherits the $\slR$-action from $H(L\otimes\R)$ so that we can
define
\[G^{J}:=\slR\ltimes H(L\otimes\R)/C_{\Z}\,.\] 
According to Lemma \ref{lemma:central extension factor group} we 
write $\left\{A,[x,y],\zeta\right\}$ for the elements of $G^{J}$ where
$A\in\slR\,,\, x,y\in L\otimes \R$ and $\zeta\in S^{1}$ and the group law reads
\[\begin{aligned}
   &\left\{A,[x,y],\zeta\right\}\cdot
\left\{A',[x',y'],\zeta'\right\}&\\&=\left\{AA',[d'x-c'y,-b'x+a'y]+[x',y'],
\zeta\zeta'\cdot e^{\pi\ii ((d'x-c'y,y')-(x',-b'x+a'y))}\right\}& 
  \end{aligned}\]
We now consider the discrete subgroup
\[\Gamma^{J}:=\sl\ltimes (L\times
L)=\left<\left\{A,[x,y],1\right\}|A\in\sl,x,y\in L\right>\leq G^{J}\,.\]
This group corresponds to the classical Jacobi group from Eichler and Zagier.
Let $\mathcal{H}$ be the usual upper half plane in $\C$ and
$\phi:\mathcal{H}\times (L\otimes \C)\to \C$ be a function. Then we define
\begin{equation}\label{Jacobi law SL}
\left(\phi\left.\right|_{k,m}\begin{pmatrix}
                               a&b\\c&d
                              \end{pmatrix}
\right)(\tau,\mathfrak{z}):=(c\tau+d)^{-k}e^{-\pi\ii
m\frac{c(\mathfrak{z},\mathfrak{z})}{c\tau
+d}}\phi\left(\frac{a\tau+b}{c\tau+d},\frac{\mathfrak{z}}{c\tau
+d}\right) 
\end{equation}
$\text{for all} \begin{pmatrix}                              
a&b\\c&d\end{pmatrix}\in\sl$ and
\begin{equation}\label{Jacobi law Z2}
\left(\phi\left.\right|_{m}[x,y]\right)(\tau,\mathfrak{z}):=e^{\pi\ii
m((x,x)\tau+2(x,\mathfrak{z}))}\phi\left(\tau,\mathfrak{z}
+x\tau+y\right) 
\end{equation}
for all $x,y\in L$. One easily shows that (\ref{Jacobi law SL}) and (\ref{Jacobi
law Z2}) jointly define an action of $\Gamma^{J}$ on $\C^{\mathcal{H}\times
(L\otimes \C)}$

We want
to remind to the definition of a Jacobi form.
\begin{definition}
A holomorphic \textit{Jacobi form} of index $m$ and weight $k$ ($k,m\in\N$) is a
holomorphic
function on $\mathcal{H}\times (L\otimes \C)$ satisfying 
\[\phi\left.\right|_{k,m}g=\phi\quad\text{for all}\quad g\in
\Gamma^{J}\]
and having a Fourier expansion 
\[\phi(\tau,\mathfrak{z})=\sum_{\substack{n\in\Z,l\in
L^{\vee}\\2nm-(l,l)\geq 0}}{f(n,l)e^{2\pi\ii
n\tau+2\pi\ii(l,\mathfrak{z})}}\,.\] 
\end{definition}
We want to define Jacobi-Eisenstein series. Therefore we introduce the subgroup
\[\Gamma_{\infty}^{J}=\left\{g\in\Gamma^{J};1\left.\right|_{k,m}g=1\right\}\]
 We now assume that $k$ is even. Since every element in $\Gamma^{J}$ can be
written in the form $\left\{I,[x,y]\right\}\times \left\{A,[0,0]\right\}$ where
$x,y\in L$ and $A\in\sl$  we obtain
\[\begin{aligned}  
&\left.\left(1\left.\right|_{k,m}\left\{I,[x,y]\right\}\right)\right|_{k,m}
\left\{A,[0,0]\right\}(\tau,\mathfrak{z})&\\
&=\left.e^{\pi\ii
m((x,x)\tau+2(x,\mathfrak{z}))}\right|_{k,m}A&\\
&=(c\tau+d)^{-k}\cdot e^{-\pi\ii
m\frac{c(\mathfrak{z},\mathfrak{z})}{c\tau
+d}}\cdot e^{\pi\ii
m((x,x)\frac{a\tau+b}{c\tau+d}+2(x,\frac{\mathfrak{z}}{c\tau+d}))}&
  \end{aligned}\]
We put $(\tau,\mathfrak{z})=(2\ii,0)$ and hence obtain with
$\varphi=\operatorname{arg}(2c\ii+d)$
\[\begin{aligned}  
&\left.\left(1\left.\right|_{k,m}\left\{I,[x,y]\right\}\right)\right|_{k,m}
\left\{A,[0,0]\right\}(2i,0)&\\
&=(2c\ii+d)^{-k}\cdot e^{-\pi\ii
m\frac{c(0,0)}{2c\ii
+d}}\cdot e^{\pi\ii
m((x,x)\frac{2a\ii+b}{2c\ii+d}+2(x,\frac{0}{2c\ii+d}))}&\\
&=(4c^{2}+d^{2})^{-k/2}\cdot e^{-\ii k\varphi}\cdot e^{\ii\pi
m\frac{(2i+4ac+bd)(x,x)}{4c^{2}+d^{2}}}&\\
&=(4c^{2}+d^{2})^{-k/2}e^{-\frac{\pi m(x,x)}{4c^{2}+d^{2}}}\cdot e^{-\ii
k\varphi}\cdot e^{\ii\pi
m\frac{(4ac+bd)(x,x)}{4c^{2}+d^{2}}}&
  \end{aligned}\]
So one has
\[\left\{I,[x,y]\right\}
\left\{A,[0,0]\right\}\in\Gamma_{\infty}^{J}\iff 1=
(4c^{2}+d^{2})^{-k/2}e^{-\frac{\pi m(x,x)}{4c^{2}+d^{2}}}\cdot e^{-\ii
k\varphi}\cdot e^{\ii\pi
m\frac{(4ac+bd)(x,x)}{4c^{2}+d^{2}}}\]
This immediately implies $x=0$ and $c=0$. This gives us the alternative
description \[\Gamma_{\infty}^{J}=\left\{\left\{\pm\begin{pmatrix}
                                     1&n\\0&1
                                    \end{pmatrix}
,[0,y]\right\};y\in L,n\in\Z\right\}\,.\]
One has the following decomposition into cosets
\begin{equation}\label{decompostion in cosets}
 \begin{aligned}
  \Gamma^{J}&=&&\bigcup_{\substack{c,d\in\Z\,,\, c\geq 0\\ (c,d)\neq(0,-1)\\
\gcd(c,d)=1}}{\bigcup_{x\in L}{\Gamma_{\infty}^{J}}\left\{\begin{pmatrix}
                              a&b\\c&d
                             \end{pmatrix},[x,0]
 \right\}}&\\
&=&&\bigcup_{\substack{c,d\in\Z\,,\,c\geq 0\\(c,d)\neq(0,-1)\\
\gcd(c,d)=1}}{\bigcup_{x\in L}{\Gamma_{\infty}^{J}}\left\{\begin{pmatrix}
                              1&0\\0&1
                             \end{pmatrix},[x,0]
 \right\}\cdot \left\{\begin{pmatrix}
                              a&b\\c&d
                             \end{pmatrix},[0,0]
 \right\}}&
 \end{aligned}
\end{equation}
where $a,b$ are chosen such that $\begin{pmatrix}
                              a&b\\c&d
                             \end{pmatrix}\in\sl\,.$  

\begin{definition}\label{Def:Jacobi E Series}
Let $k,m\in\N$, $k$ even. The \textit{Jacobi-Eisenstein series} of weight $k$
and index
$m$ is given by 
\[E_{k,m}:=\sum_{g\in\Gamma_{\infty}^{J}\backslash
\Gamma^{J}}{\left.1\right|_{k,m}g}\] 
\end{definition}
We want to describe the region of convergence for the Jacobi-Eisenstein series. The next
result can be found in section 2 of \cite{Ziegler}.
\begin{prop}\label{prop:convergence JE series}
 For $k>2+N$ even the Jacobi-Eisenstein series $E_{k,m}$ converges normally on
$\mathcal{H}\times (L\otimes \C)$. The convergence is uniform on vertical
strips of the form
\[V(\delta):=\left\{(\tau,\mathfrak{z})\in\mathcal{H}\times (L\otimes
\C)\,;\,\tau=x+\ii y\,,\,y\geq \delta\,,\,x^{2}\leq
\delta^{-1}\,,\,\mathfrak{z}^{\ast}\mathfrak{z}\leq \delta^{-1}\right\}\,.\]
\end{prop}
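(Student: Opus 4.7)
The plan is to follow the strategy outlined in section~2 of \cite{Ziegler}: use the coset decomposition (\ref{decompostion in cosets}) to write $E_{k,m}$ as a double sum indexed by coprime pairs $(c,d)$ with $c\geq 0$ and by lattice vectors $x\in L$, then bound the absolute value of each summand uniformly on $V(\delta)$, thereby separating an inner Gaussian theta sum from an outer classical $\slR$-Eisenstein sum.

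First I would plug the representatives $\{I,[x,0]\}\cdot\{A,[0,0]\}$ into the slash operator via (\ref{Jacobi law SL}) and (\ref{Jacobi law Z2}). The computation already carried out in the derivation of $\Gamma_{\infty}^{J}$ shows
\[ 1|_{k,m}(\{I,[x,0]\}\{A,[0,0]\})(\tau,\mathfrak{z})=(c\tau+d)^{-k}\,e^{-\pi\ii m c(\mathfrak{z},\mathfrak{z})/(c\tau+d)}\,e^{\pi\ii m\bigl((x,x)\frac{a\tau+b}{c\tau+d}+2(x,\mathfrak{z}/(c\tau+d))\bigr)}. \]
Taking absolute values and using $\Im(\frac{a\tau+b}{c\tau+d})=y/|c\tau+d|^{2}$, each term is dominated on $V(\delta)$ by
\[ C_{1}(\delta)\,|c\tau+d|^{-k}\,e^{-\pi m (x,x)\,y/|c\tau+d|^{2}}\,e^{C_{2}(\delta)\,|x|/|c\tau+d|}, \]
with constants $C_{1},C_{2}$ depending only on $\delta$ and $m$, absorbing the bounded $\mathfrak{z}$-dependence coming from $\mathfrak{z}^{\ast}\mathfrak{z}\leq\delta^{-1}$.

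For fixed coprime $(c,d)$, the inner sum over $x\in L$ is a shifted Gaussian theta series. Completing the square in the positive definite form $(\cdot,\cdot)_{S}$ and comparing with a Gaussian integral over $L\otimes\R\cong\R^{N}$, I expect a uniform estimate
\[ \sum_{x\in L} e^{-\pi m(x,x)\,y/|c\tau+d|^{2}+C_{2}|x|/|c\tau+d|}\leq C_{3}(\delta)\,|c\tau+d|^{N}, \]
since the scaling parameter $\alpha=\pi m y/|c\tau+d|^{2}$ produces $\alpha^{-N/2}\sim|c\tau+d|^{N}$ while the linear perturbation contributes only the constant factor $e^{C_{2}^{2}/(4\pi m \delta)}$. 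The remaining outer sum then reduces to
\[ \sum_{\substack{c,d\in\Z,\,c\geq 0\\ \gcd(c,d)=1,\,(c,d)\neq(0,-1)}} |c\tau+d|^{N-k}, \]
which is a classical coprime $\slR$-Eisenstein sum that converges uniformly on $V(\delta)$ if and only if $k-N>2$, matching the hypothesis $k>2+N$.

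The main obstacle is establishing the uniform theta-series bound above: one must verify that the linear term $2\pi\ii m(x,\mathfrak{z}/(c\tau+d))$ never outpaces the Gaussian decay in $x$, so that the completion-of-square step produces a constant depending only on $\delta$ rather than one scaling with $(c,d)$. For the $\mathfrak{z}$-dependent exponent this reduces to showing that $|\mathfrak{z}|/|c\tau+d|$ remains small compared with the Gaussian width $\sqrt{\alpha^{-1}}\sim|c\tau+d|/\sqrt{y}$, which is precisely where the defining constraints $y\geq\delta$ and $\mathfrak{z}^{\ast}\mathfrak{z}\leq\delta^{-1}$ of the strip $V(\delta)$ enter in an essential way.
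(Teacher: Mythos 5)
The paper offers no proof of this proposition at all --- it only cites Section~2 of \cite{Ziegler} --- and your sketch is a correct reconstruction of the standard argument used there: termwise absolute-value bounds on the coset representatives, a uniform Gaussian estimate for the inner theta sum over $x\in L$, and reduction to the coprime Eisenstein sum $\sum|c\tau+d|^{N-k}$, convergent precisely for $k>N+2$. The estimates all go through: on $V(\delta)$ one has $|c\tau+d|\geq\min(1,\delta)$ for the coprime pairs occurring, the completed square in the linear term contributes only the $(c,d)$-independent factor $e^{C_{2}^{2}/(4\pi m\mu\delta)}$ (with $\mu$ the smallest eigenvalue of $S$), and the Gaussian sum is $O_{\delta}(|c\tau+d|^{N})$, so the proposal is sound.
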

As in \cite{Ziegler} we introduce theta series. 
\begin{definition}
 Let $S\in\Z^{N\times N}$ be symmetric and positive definite. Let
$a,b\in\Q^{N}$, $\tau\in\mathcal{H}$ and $\mathfrak{z}\in L\otimes \C$. The
\textit{theta series} of charaacteristic $S,a,b$ is given by
\[\Theta_{S,a,b}(\tau,\mathfrak{z}):=\sum_{\lambda\in\Z^{N}}{\operatorname{\exp}
(\pi\ii\left\{(\lambda+a,\lambda+a)_{S}\,\tau+2(\lambda+a,\mathfrak{z}+b)_{S}
\right\} ) } \] 
\end{definition}
\begin{rem}
 If we use the notation of \cite{KriegMFQuat} we get 
\[\Theta_{S,a,b}(\tau,\mathfrak{z})=\Theta_{a,S(\mathfrak{z}+b)}(\tau,S,\Z^{N}) 
\,.\]
Thus $\Theta_{S,a,b}(\tau,\mathfrak{z})$ converges absolutely and locally
uniformly in dependence on $S,a,b,\tau$ and $\mathfrak{z}$. Moreover
$\Theta_{S,a,b}(\tau,\mathfrak{z})$ considered as a function of $\tau$ and
$\mathfrak{z}$ turns out to be holomorphic on $\mathcal{H}\times (L\otimes
\C)$.
\end{rem}

\section{Two formulae for the Fourier expansion of
$E_{k,m}$}\label{section:formulae for FJ coefficients}
In this section we derive the Fourier expansion of $E_{k,m}$ for arbitrary
lattices. Moreover we give two different formulae for the Fourier coefficients
which are contained in Theorem  \ref{theorem:Fourier expansion according to EZ}
for arbitrary lattices and in Theorem \ref{theorem: Fourier expansion for m
equals 1} for the special case if $m$ equals 1 and $\operatorname{rank}(L)$ is
even. In both formulae certain Dirichlet series appear where the coefficients
are given by certain representation numbers. The basic problem is to evaluate
these Dirichlet series which is done for special types of lattices in the following chapters. The derivation of the first formula follows the methods presented in \cite{Eichler-Zagier} and can be considered as an analytic formula. The second formula is a generalization of the technique used in \cite{Eie-Krieg} where we make use of the theta transformation law. This one is more natural since it contains only intrinsic data of the lattice $L$.
\vspace{5mm}

\noindent We start with the coset system from (\ref{decompostion in cosets})
to derive an explicit description of $E_{k,m}$:
\[\begin{aligned}
E_{k,m}(\tau,\mathfrak{z})&=&&\frac{1}{2}\sum_{\substack{c,d\in\Z\\(c,d)=1}}{
\sum_ { x\in L}{\left.\left(\left.1\right|_{k,m}\left\{\begin{pmatrix}
                              1&0\\0&1
                             \end{pmatrix},[x,0]
 \right\}\right)\right|_{k,m} \left\{\begin{pmatrix}
                              a&b\\c&d
                             \end{pmatrix},[0,0]
 \right\}(\tau,\mathfrak{z})}} &\\
&=&&\frac{1}{2}\sum_{\substack{c,d\in\Z\\(c,d)=1}}{
\sum_ { x\in L}{(c\tau+d)^{-k} \exp\left(\pi\ii
m\frac{(x,x)(a\tau+b)+2(x,\mathfrak{z}
)-
c(\mathfrak{z},\mathfrak{z})}{c\tau
+d}\right)}}   &
  \end{aligned}\]
Next we split $E_{k,m}$ into the parts
\[E_{k,m}=E_{k,m}^{0}+E_{k,m}^{\ast}\]
where the first summands denotes the contribution from the summands with $c=0$
and the second one stands for the summands where $c\neq 0$. If $c=0$ we have
$d=\pm 1$. Since we have $-L=L$ it follows
\[\begin{aligned}
E_{k,m}^{0}(\tau,\mathfrak{z})&=&&\frac{1}{2}
\sum_{x\in L}{\exp\left(\pi\ii
m\left((x,x)\tau+2(x,\mathfrak{z})\right)\right)}+\frac{1}{2}
\sum_{x\in L}{\exp\left(\pi\ii
m\left((x,x)\tau-2(x,\mathfrak{z})\right)\right)}&\\
&=&&\sum_{x\in L}{\exp\left(\pi\ii
m\left((x,x)\tau+2(x,\mathfrak{z})\right)\right)}&\\
&=&&\Theta_{mS,0,0}(\tau,\mathfrak{z})\,.&
  \end{aligned}\]
If $c\neq 0$ we have
\[\begin{aligned}
   &\frac{(x,x)(a\tau+b)+2(x,\mathfrak{z}
)-
c(\mathfrak{z},\mathfrak{z})}{c\tau
+d}\qquad&\\
&=-\frac{c\left[(\mathfrak{z},\mathfrak{z})-2(\frac{x}{c},
\mathfrak{z})+(\frac{x}{c},\frac{x}{c})\right]
}{c\tau+d} +\frac{\frac{(x,x)}{c}+(a\tau
+b)(x,x)}{c\tau+d}&\\
&=-\frac{c(\mathfrak{z}-\frac{x}{c},\mathfrak{z}-\frac{x}{c})}{c\tau+d}+\frac{
(x,x)a(c\tau+d)}{c(c\tau +d)}&\\
&=-\frac{c(\mathfrak{z}-\frac{x}{c},\mathfrak{z}-\frac{x}{c})}{c\tau+d}+\frac{
a(x,x)}{c}&
 \end{aligned}\]
Since $k$ is even this implies
\begin{eqnarray}\label{Ausdruck fuer E_{k,m} stern}\begin{aligned}  
&E_{k,m}^{\ast}(\tau,\mathfrak{z})&\\&=\sum_{c=1}^{\infty}{c^{-k}}{\sum_{\substack{
d\in\Z\\(c,d)=1}}{ \sum_{x\in
L}{\left(\tau+\frac{d}{c}\right)^{-k}\exp\left(\pi\ii
m\left(-\frac{c(\mathfrak{z}-\frac{x}{c},\mathfrak{z}-\frac{x}{c})}{c\tau+d}
+\frac{
a(x,x)}{c}\right)\right)}}}&
  \end{aligned}
\end{eqnarray}
One immedeately sees that the substitutions $d\mapsto d+c$ and $x\mapsto
x+c\cdot\mathfrak{e}_{j}$ correspond to $\tau\mapsto \tau+1$ and
$\mathfrak{z}+\mathfrak{e}_{j}$ for all $1\leq j\leq N$. Therefore
we can write
\[\begin{aligned}  
E_{k,m}^{\ast}&=&&\sum_{c=1}^{\infty}{c^{-k}}{\sum_{\substack
{d\bmod c\\(c,d)=1}}{
\,\sum_{\substack{x\in L\\x 
\bmod c}}{\,\,\sum_{\substack{l\in\Z\\h\in
L}}{\left(\tau+\frac{d}{c}+l\right)^{-k} \exp\left(\pi\ii m\frac{
a(x,x)}{c}\right)}}}}&\\
&&&\qquad\qquad\qquad\qquad\qquad\times\exp\left(\pi\ii
m\left(-\frac{(\mathfrak{z}-\frac{x}{c}+h,\mathfrak{z}-\frac{x}{c}+h)}{\tau+
\frac{d}{c}+l}\right)\right)&\\
&=&&\sum_{c=1}^{\infty}{c^{-k}}{\sum_{\substack{
d\bmod c\\(c,d)=1}}{
\,\,\sum_{\substack{x\in L\\x 
\bmod
c}}{F_{k,m}\left(\tau+\frac{d}{c},
\mathfrak{z}-\frac{x}{c}\right)e_{c}\left(md^{-1}\frac{1}{2}(x,x)\right)}}}&
  \end{aligned}\]

where $e_{c}(a^{-1}b)=e^{2\pi\ii n/c}$ with $an\equiv b \pmod c$,
\[F_{k,m}(\tau,\mathfrak{z}):=\sum_{\substack{l\in\Z\\h\in
L}}{(\tau+l)^{-k}\exp\left(-\pi\ii
m\frac{(\mathfrak{z}+h,\mathfrak{z}+h)}{\tau+l}\right)}\] 
and $x\equiv y\bmod c$ means $x-y\in (c\Z)^{N}$. $F_{k,m}$ is periodic in
$\tau$ and $\mathfrak{z}$. Using the Poisson summation formula we obtain
\begin{equation}\label{Poisson Formel}
F_{k,m}(\tau,\mathfrak{z})=\sum_{\substack{l\in\Z\\h\in
L}}{\gamma(l,h)}e^{2\pi\ii l\tau}e^{\pi\ii (h,\mathfrak{z})}\end{equation}
where
\[\gamma(l,h)=\int_{\Im(\tau)=y}{\tau^{-k}e^{-2\pi\ii l\tau}\int_{
\Im(\mathfrak{z})=\nu}{e^{-\pi\ii\left(\frac{m(\mathfrak{z},\mathfrak{z})}{\tau}
+(h,\mathfrak{z})\right)}\mathrm{d}\mathfrak{z}}\,\mathrm{d}\tau} \]
with $y>0$ and $\nu\in\R^{N}$ arbitrary.

This yields 
\[ E_{k,m}^{\ast}=\sum_{\substack{l\in\Z\\h\in L}}{e_{k,m}(l,h)e^{2\pi\ii
l\tau}e^{\pi\ii (h,\mathfrak{z})}}\]
where
\[e_{k,m}(l,h)=\gamma(l,h)\,\sum_{c=1}^{\infty}{c^{-k}{\sum_{\substack{d \bmod
c\\(c,d)=1}}\sum_{\substack{x\in L\\x \bmod
c}}{e_{c}\left(d^{-1}\frac{m}{2}(x,x)-\frac{1}{2}(h,x)+ld\right)}}}
\]
By replacing $x$ by $d\cdot x$ a permutation of the summands in the inner sum is
performed. Then the inner summand reads $e_{c}(dQ(x))$ where
\[Q(x)=Q_{l,h}(x):=\frac{m(x,x)-(h,x)+2l}{2}\,.\] 
We use the identity
\[\begin{aligned}
&\sum_{\substack{d \bmod
c\\(c,d)=1}}{e_{c}(dN)}=\sum_{\substack{a|c\\a|N}}{\mu\left(\frac{c}{a}\right)a}
&\end{aligned}\]
where $\mu$ denotes Möbius function. We introduce the quantity
\[\Norm_{a}(Q)=\sharp\{x\bmod a\,|\,Q(x)\equiv 0 \bmod a\}\,.\]
Then we obtain the following expression for the inner sum
\[\sum_{\substack{d \bmod c\\(c,d)=1\\
x\bmod
c}}{e_{c}\left(dQ(x)\right)}=\sum_{a|c}{\mu\left(\frac{c}{a}
\right)a\sum_{\substack{x\bmod c\\Q(x)\equiv 0\bmod
a}}1}=\sum_{a|c}{\mu\left(\frac{c}{a}
\right) \frac{c^{N}}{a^{N-1}}\Norm_{a}(Q)}\,.\]
By virtue of the identity $\sum_{b=1}^{\infty}{\mu(b)b^{-s}}=\zeta(s)^{-1}$ we
obtain
\[\begin{aligned}
   \sum_{c=1}^{\infty}{c^{-k}\sum_{a|c}{\mu\left(\frac{c}{a}
\right)
\frac{c^{N}}{a^{N-1}}\Norm_{a}(Q)}}&=&&\sum_{b=1}^{\infty}{\sum_{a=1}^{\infty
}(ab)^{-k}\mu(b)ab^{N}\Norm_{a
} (Q)}\\
&=&&\zeta(k-N)^{-1}\cdot\sum_{a=1}^{\infty}{\frac{\Norm_{a}(Q)}{a^{k-1}}
} &\,.
  \end{aligned}\]
This gives us a formula for the Fourier expansion of $E_{k,m}$.
\begin{theorem}\label{theorem:Fourier expansion according to EZ}
 Let $k,m\in\N$ and $k>2+\operatorname{rank}(L)$ be even. Then $E_{k,m}$ is a
holomorhic function on $\mathcal{H}\times(L\otimes \C)$. Its Fourier expansion 
is given by 
\[E_{k,m}=\Theta_{mS,0,0}(\tau,\mathfrak{z})+\sum_{\substack{l\in\Z\\h\in
L}}{e_{k,m}(l,h)e^{2\pi\ii
l\tau}e^{\pi\ii (h,\mathfrak{z})}}\]
with Fourier-Jacobi coefficients
\[e_{k,m}(l,h)=\frac{\gamma(l,h)}{\zeta(k-N)}\,\sum_{a=1}^{\infty}{\frac{\Norm_{
a}(Q)}{a^{k-1}}
}\,.\]
Here the quantitity $\gamma$ is given by
\[\gamma(l,h)=\int_{\Im(\tau)=y}{\tau^{-k}e^{-2\pi\ii l\tau}\int_{
\Im(\mathfrak{z})=\nu}{e^{-\pi\ii\left(\frac{m(\mathfrak{z},\mathfrak{z})}{\tau}
+(h,\mathfrak{z})\right)}\mathrm{d}\mathfrak{z}}\,\mathrm{d}\tau} \]
with $y>0$ and $\nu\in\R^{N}$ arbitrary.
\end{theorem}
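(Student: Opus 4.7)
The plan is to follow the unfolding-and-Poisson strategy that the preceding calculation already sets up, and to organize it into four clean steps. First I would use the coset decomposition (\ref{decompostion in cosets}) to expand
\[E_{k,m}(\tau,\mathfrak{z})=\tfrac{1}{2}\sum_{\substack{c,d\in\Z\\(c,d)=1}}\sum_{x\in L}(c\tau+d)^{-k}\exp\!\left(\pi\ii m\frac{(x,x)(a\tau+b)+2(x,\mathfrak{z})-c(\mathfrak{z},\mathfrak{z})}{c\tau+d}\right),\]
and split $E_{k,m}=E_{k,m}^{0}+E_{k,m}^{\ast}$. For $c=0$ one has $d=\pm1$, and pairing the two terms (using $-L=L$) immediately collapses the sum to the theta series $\Theta_{mS,0,0}(\tau,\mathfrak{z})$, giving the first term of the claimed expansion.

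Next, for the part with $c\neq 0$ I would complete the square in $x$ to rewrite the exponent as $-c(\mathfrak{z}-x/c,\mathfrak{z}-x/c)/(c\tau+d)+a(x,x)/c$. The joint substitutions $d\mapsto d+c$, $x\mapsto x+c\mathfrak{e}_{j}$ leave this invariant, so one may sum $d\bmod c$ and $x\bmod c$, and collect the residual $l\in\Z$, $h\in L$ into the doubly periodic function
\[F_{k,m}(\tau,\mathfrak{z}):=\sum_{\substack{l\in\Z\\h\in L}}(\tau+l)^{-k}\exp\!\left(-\pi\ii m\frac{(\mathfrak{z}+h,\mathfrak{z}+h)}{\tau+l}\right).\]
Poisson summation (justified because $k>2+N$ guarantees absolute convergence in both variables) gives the Fourier expansion (\ref{Poisson Formel}) with the integral coefficients $\gamma(l,h)$. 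Substituting this back produces
\[E_{k,m}^{\ast}=\sum_{\substack{l\in\Z\\h\in L}}e_{k,m}(l,h)\,e^{2\pi\ii l\tau}\,e^{\pi\ii(h,\mathfrak{z})}\]
with the $c,d,x$ sum already displayed before the theorem.

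To bring the coefficients into the closed form stated, I would replace $x$ by $d\cdot x$ in the inner sum, which merely permutes the summation and converts the exponential into $e_{c}(dQ_{l,h}(x))$ with $Q_{l,h}(x)=\tfrac{1}{2}(m(x,x)-(h,x)+2l)$. The Möbius identity $\sum_{d\bmod c,\,(c,d)=1}e_{c}(dN)=\sum_{a\mid c,\,a\mid N}\mu(c/a)\,a$ then reduces the sum over $d$ to a condition $Q(x)\equiv 0\bmod a$, which introduces exactly the counting function $\Norm_{a}(Q)$ and a factor $c^{N}/a^{N-1}$ from the unrestricted $x\bmod c$. A change of summation variable $c=ab$ followed by $\sum_{b\geq 1}\mu(b)b^{-(k-N)}=\zeta(k-N)^{-1}$ separates the double series into $\zeta(k-N)^{-1}\cdot\sum_{a\geq 1}\Norm_{a}(Q)/a^{k-1}$, which is the formula in the theorem.

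The main obstacle is justifying these manipulations uniformly. Normal convergence of $E_{k,m}$ for $k>2+N$ is supplied by Proposition \ref{prop:convergence JE series}; this legitimizes reordering the Poincaré sum into the shape $E_{k,m}^{0}+E_{k,m}^{\ast}$ and further into sums over $c$, $d\bmod c$, $x\bmod c$. The interchange needed for Poisson summation of $F_{k,m}$ and the final Möbius rearrangement both rely on absolute convergence of $\sum_{a}\Norm_{a}(Q)\,a^{1-k}$, which holds because $\Norm_{a}(Q)=O(a^{N})$ and $k-1>N+1$; this same bound, together with non-vanishing of $\zeta(k-N)$ for $k-N>1$, ensures the coefficients $e_{k,m}(l,h)$ are well-defined. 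Holomorphy of $E_{k,m}$ on $\mathcal{H}\times(L\otimes\C)$ then follows from normal convergence and holomorphy of each summand in the defining Poincaré sum.
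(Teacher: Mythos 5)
Your proposal is correct and follows essentially the same route as the paper: the coset decomposition with the $c=0$/$c\neq 0$ split, completion of the square, unfolding into the periodic function $F_{k,m}$ and Poisson summation, then the substitution $x\mapsto dx$, the Ramanujan--M\"obius identity, and the $\zeta(k-N)^{-1}$ separation. The convergence justifications you add (normal convergence for $k>2+N$ and the bound $\Norm_{a}(Q)=O(a^{N})$) are consistent with what the paper implicitly relies on via Proposition \ref{prop:convergence JE series}.
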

We want to give a slightly different description of the Fourier coefficients
where no integral has to be evaluated. This method makes use of the theta
transformation formula. For a reference confer for example \cite{KriegMFQuat}.
\begin{theorem}[Theta transformation
formula]\label{theorem:theta trafo}
Let $S\in\Z^{N\times N}$ be symmetric and positive definite and let
$a\in\Q$. Then the identity 
\[\Theta_{S,a,0}(-\tau^{-1},\mathfrak{z}\tau^{-1})=\left(\frac{\tau}{\ii}
\right)^ { N/2 } \left(\det
S\right)^{-1/2}e^{\pi\ii\frac{(\mathfrak{z},\mathfrak{z})_{S}}{\tau}}\,\sum_{
p\,:\,(S^{-1}\Z^{N})/\Z^{N}}{\Theta_{S,p,-a}(\tau,\mathfrak{z})} \]
holds for all $\tau\in\mathcal{H}$ and $\mathfrak{z}\in L\otimes \C$. 
\end{theorem}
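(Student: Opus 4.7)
The plan is to prove the identity first on the imaginary axis $\tau=\ii t$, $t>0$, with $\mathfrak{z}\in\R^{N}$, and then extend to all $(\tau,\mathfrak{z})\in\mathcal{H}\times(L\otimes\C)$ by analytic continuation, since both sides are holomorphic in $(\tau,\mathfrak{z})$ (the right-hand side involves only the standard theta series, already noted to be holomorphic).

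The first step is an algebraic manipulation: in the exponent of $\Theta_{S,a,0}(-\tau^{-1},\mathfrak{z}\tau^{-1})$ I would complete the square in $y=\lambda+a$ via
\[
-\tau^{-1}(y,y)_{S}+2\tau^{-1}(y,\mathfrak{z})_{S}
=-\tau^{-1}(y-\mathfrak{z},y-\mathfrak{z})_{S}+\tau^{-1}(\mathfrak{z},\mathfrak{z})_{S},
\]
so that the prefactor $e^{\pi\ii(\mathfrak{z},\mathfrak{z})_{S}/\tau}$ is pulled out and what remains is $\sum_{\lambda\in\Z^{N}}g(\lambda)$ with $g(x)=\exp(-\pi\ii\tau^{-1}(x+a-\mathfrak{z},x+a-\mathfrak{z})_{S})$.

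The second step is Poisson summation. For $\tau=\ii t$ the function $g$ is a genuine Schwartz Gaussian, so one computes its Fourier transform by the standard multivariate formula together with the translation rule
\[
\widehat{g}(n)=e^{2\pi\ii\, n^{\operatorname{t}}(a-\mathfrak{z})}\,t^{N/2}(\det S)^{-1/2}\,\exp\bigl(-\pi t\, n^{\operatorname{t}}S^{-1}n\bigr),
\]
and obtains
\[
\sum_{\lambda\in\Z^{N}}g(\lambda)=t^{N/2}(\det S)^{-1/2}\sum_{n\in\Z^{N}}e^{2\pi\ii\,n^{\operatorname{t}}(a-\mathfrak{z})}e^{-\pi t\,n^{\operatorname{t}}S^{-1}n}.
\]
Since $\tau/\ii=t$ on the imaginary axis, the prefactor is exactly $(\tau/\ii)^{N/2}(\det S)^{-1/2}$.

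The third step is to rewrite the $n$-sum as the claimed sum of theta series. Every $n\in\Z^{N}$ can be written uniquely as $n=S(\mu+p)$ with $\mu\in\Z^{N}$ and $p$ ranging over a set of representatives of $(S^{-1}\Z^{N})/\Z^{N}$. Under this substitution $n^{\operatorname{t}}S^{-1}n=(\mu+p,\mu+p)_{S}$ and $n^{\operatorname{t}}(\mathfrak{z}-a)=(\mu+p,\mathfrak{z}-a)_{S}$; after sending $n\mapsto -n$ (equivalently $\mu+p\mapsto-(\mu+p)$) the inner-sum exponent becomes
\[
\pi\ii\{(\mu+p,\mu+p)_{S}\,\tau+2(\mu+p,\mathfrak{z}-a)_{S}\},
\]
which is precisely the summand of $\Theta_{S,p,-a}(\tau,\mathfrak{z})$. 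Combining everything gives the identity on $\tau=\ii t$, $\mathfrak{z}\in\R^{N}$; uniform convergence on compact subsets then allows analytic continuation to obtain the theorem in full generality.

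The main obstacle is bookkeeping of signs and of the branch of $(\tau/\ii)^{N/2}$: one must check that the Fourier-transform factor $t^{N/2}$, valid only for $\tau$ on the positive imaginary axis, continues to the correct holomorphic branch $(\tau/\ii)^{N/2}$ on all of $\mathcal{H}$, and that the rewriting $n\leftrightarrow S(\mu+p)$ consistently handles the half-integer characteristic $-a$ with the sign choice implicit in Poisson summation. Once these are fixed on the imaginary axis, analytic continuation finishes the argument without further work.
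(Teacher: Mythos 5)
Your proposal is correct, but it takes a different route from the paper. The paper does not prove the transformation formula from scratch: it quotes the known inversion formula for $\Theta_{a,w}(\tau,S,\Z^{N})$ from Krieg's lecture notes \cite{KriegMFQuat}, and the whole proof consists of translating notation --- applying that cited formula, expanding the resulting series $\Theta_{S^{-1},S\mathfrak{z}\tau^{-1},0}(\tau,-aS)$, and reindexing $\lambda\mapsto S^{-1}\lambda$ to produce the sum over $p:(S^{-1}\Z^{N})/\Z^{N}$. You instead unfold that black box: completing the square to extract $e^{\pi\ii(\mathfrak{z},\mathfrak{z})_{S}/\tau}$, applying Poisson summation to the Gaussian on the slice $\tau=\ii t$, $\mathfrak{z}\in\R^{N}$, reindexing $n=S(\mu+p)$ (with the harmless sign flip $n\mapsto -n$ to match the characteristic $-a$), and analytically continuing. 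All the individual steps check out: the Fourier transform of the shifted Gaussian, the bijection $\Z^{N}\leftrightarrow S^{-1}\Z^{N}$ via $S$, and the fact that the sum over $p$ is independent of the choice of representatives, which legitimizes replacing $p$ by $-p$. The one place where you should be slightly more explicit is the continuation off the totally real set $\ii\R_{>0}\times\R^{N}$, which is not open: the standard two-step identity-theorem argument (first continue in $\tau$ for fixed real $\mathfrak{z}$, then in $\mathfrak{z}$ for fixed $\tau$, using that a holomorphic function on $\C^{N}$ vanishing on $\R^{N}$ vanishes identically) closes this, and the principal branch of $(\tau/\ii)^{N/2}$ is unambiguous since $\Re(\tau/\ii)>0$ on $\mathcal{H}$. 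What your approach buys is self-containedness --- the reader need not consult \cite{KriegMFQuat} --- at the cost of redoing an argument the paper deliberately outsources; the paper's version is shorter and keeps the normalization conventions aligned with the reference it uses elsewhere.
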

\begin{proof}
 According to \cite{KriegMFQuat} we have
\[\begin{aligned}
\Theta_{S,a,0}(-\tau^{-1},\mathfrak{z}\tau^{-1})&=&&\Theta_{a,S\mathfrak{z}
\tau^{-1}}(-\tau^{-1},S,\Z^{N})&\\ 
&=&&\left(\frac{\tau}{\ii}
\right)^ { N/2 } \left(\det
S\right)^{-1/2}e^{2\pi\ii\frac{(a,\mathfrak{z})_{S}}{\tau}}\cdot
\Theta_{S\mathfrak{z}\tau^{-1},-a}(\tau,S^{-1},\Z^{N})&\\
&=&&\left(\frac{\tau}{\ii}
\right)^ { N/2 } \left(\det
S\right)^{-1/2}e^{2\pi\ii\frac{(a,\mathfrak{z})_{S}}{\tau}}\cdot
\Theta_{S^{-1},S\mathfrak{z}\tau^{-1},0}(\tau,-aS)\,.&
  \end{aligned}
\]
Expanding the last theta series yields
\[\begin{aligned}
   &\Theta_{S^{-1},S\mathfrak{z}\tau^{-1},0}(\tau,-aS)&\\&=\sum_{\lambda\in\Z^{N}}{
\operatorname{\exp}\left\{\pi\ii\left((\lambda+S\mathfrak{z}\tau^{-1},
\lambda+S\mathfrak{z}\tau^{-1})_{S^{-1}}\tau+2(\lambda+S\mathfrak{z}
\tau^{-1},-aS)_{S^{-1}}\right)\right\}} \,.&
  \end{aligned}\]
We calculate
\[(\lambda+S\mathfrak{z}\tau^{-1},
\lambda+S\mathfrak{z}\tau^{-1})_{S^{-1}}\tau=(\lambda,\lambda)_{S^{-1}}\,
\tau+2(\mathfrak{z},\lambda)_{I_{N}}+(\mathfrak{z},\mathfrak{z})_{S}\,\tau^{-1}
\]
and
\[2(\lambda+S\mathfrak{z}
\tau^{-1},-aS)_{S^{-1}}=-2(\lambda,a)_{I_{N}}-2(\mathfrak{z},a)_{S}\,\tau^{-1}\,
.\]
Therefore we obtain
\[\begin{aligned}
&\Theta_{S,a,0}(-\tau^{-1},\mathfrak{z}\tau^{-1})&\\&=\left(\frac{
\tau } { \ii }
\right)^ { N/2 } \left(\det
S\right)^{-1/2}e^{\pi\ii\frac{(\mathfrak{z},\mathfrak{z})_{S}}{\tau}}
\sum_{\lambda\in\Z^{N}}{\operatorname{\exp}\left\{
\pi\ii\left((S^{-1}\lambda,S^{-1}\lambda)_{S}\,\tau+2(S^{-1}\lambda,\mathfrak{z}
-a)_{S}\right)
\right\}}& \\
&=\left(\frac{\tau}{\ii}
\right)^ { N/2 } \left(\det
S\right)^{-1/2}e^{\pi\ii\frac{(\mathfrak{z},\mathfrak{z})_{S}}{\tau}}\,\sum_{
p\,:\,(S^{-1}\Z^{N})/\Z^{N}}{\Theta_{S,p,-a}(\tau,\mathfrak{z})}\,.&
\end{aligned}\] 
\end{proof}
Now we apply this formula to $E_{k,m}^{\ast}$. We consider the inner sum of
(\ref{Ausdruck fuer E_{k,m} stern}) and obtain by virtue of the theta
transformation formula 
\[\begin{aligned}
&\sum_{x\in
L}{\exp\left(\pi\ii
m\left(-\frac{c(\mathfrak{z}-\frac{x}{c},\mathfrak{z}-\frac{x}{c})}{c\tau+d}
+\frac{a(x,x)}{c}\right)\right)}&\\
&=\sum_{\substack{x\in
L\\x \bmod c}}{\exp\left(\pi\ii
m\frac{a(x,x)}{c}\right)\,\sum_{\lambda\in
L}{\exp\left(\pi\ii
m\left(-\frac{(\mathfrak{z}-\frac{x}{c}+\lambda,\mathfrak{z}-\frac{x}{c}
+\lambda) } { \tau+d/c }
\right)\right)}}&\\
&=\sum_{\substack{x\in
L\\x \bmod c}}{\exp\left(\pi\ii
m\frac{a(x,x)}{c}\right)\exp\left(-\pi\ii
m\frac{(\mathfrak{z}-\frac{x}{c},\mathfrak{z}-\frac{x}{c})}{\tau+d/c}\right)}&\\
&\qquad\times\Theta_{mS,0,0}\left(-(\tau+d/c)^{-1},(\mathfrak{z}-x/c)(\tau+d/c)^{-1}\right)
& \\
&=\left(\frac{\tau+d/c}{\ii}
\right)^ { N/2 } \left(\det
mS\right)^{-1/2}\sum_{\substack{x\in
L\\x \bmod c}}{\exp\left(\pi\ii
m\frac{a(x,x)}{c}\right)}&\\
&\qquad\times\sum_{
p\,:\,((mS)^{-1}\Z^{N})/\Z^{N}}{\Theta_{mS,p,0}(\tau+d/c,\mathfrak{z}-x/c)}&\\
&=\left(\frac{\tau+d/c}{m\ii}
\right)^ { N/2 } \left(\det
S\right)^{-1/2}\sum_{\substack{x\in
L\\x \bmod c}}{\exp\left(\pi\ii
m\frac{a(x,x)}{c}\right)}&\\
&\qquad\times\sum_{
p\,:\,((S)^{-1}\Z^{N})/\Z^{N}}{\Theta_{S,p,0}\left(\frac{\tau+d/c}{m},\mathfrak{
z } -x/c\right)}\,.&
  \end{aligned}	\]
Now we use division with remainder to write  $d=d'+lmc$. This yields
\[\begin{aligned}
   &E_{k,m}^{\ast}(\tau,\mathfrak{z})&\\&=\frac{(\det
S)^{-1/2}}{\ii^{N/2}}\sum_{\substack{c\in\N\\d' \bmod mc\\(c,d')=1}}{
(mc)^{-k}\sum_{l\in\Z}{\left(\frac{\tau}{m}+\frac{d'}{mc}+l\right)^{
N/2-k}}}&\\
&\times\sum_{\substack{x\in
L\\x \bmod c}}{\exp\left(\pi\ii
m\frac{a(x,x)}{c}\right)}\sum_{
p\,:\,((S)^{-1}\Z^{N})/\Z^{N}}{\Theta_{S,p,0}\left(\frac{\tau}{m}+\frac{d'}{mc}+
l, \mathfrak {
z }-x/c\right)}&\\
&=\frac{(\det
S)^{-1/2}}{\ii^{N/2}}\sum_{\substack{c\in\N\\d' \bmod mc\\(c,d')=1}}{
(mc)^{-k}\sum_{l\in\Z}{\left(\frac{\tau}{m}+\frac{d'}{mc}+l\right)^{
N/2-k}}}&\\
&\times\sum_{\substack{x\in
L\\x \bmod c}}{\exp\left(\pi\ii
m\frac{a(x,x)}{c}\right)}\sum_{
p\,:\,((S)^{-1}\Z^{N})/\Z^{N}}{e^{\pi\ii
l(p,p)}\Theta_{S,p,0}\left(\frac{\tau}{m} +\frac { d' } { mc } , \mathfrak {
z }-x/c\right)}&
  \end{aligned}\]
As usual we define the level of $L$  as
\[\min\left\{\mu\in\N\,|\,\mu(p,p)_{S}\in2\Z\,\,\text{for
all}\,\,p\in L^{\vee}\right\}=q\left(L\right)\,.\]
Now we can apply the well-known identity 
\[\sum_{l\in\Z}{\left(\tau+l\right)^{-k}}=\frac{\left(-2\pi\ii\right)^{k}}{
(k-1)! } \cdot\sum_{ r\in\N }{r^{ k-1}e^{2\pi\ii r\tau}} \]
which holds for all $\tau\in\mathcal{H}$ and $k\geq 2$ to obtain
\begin{eqnarray*}\begin{aligned}&E_{k,m}^{\ast}(\tau,\mathfrak{z})&\\&=\frac {(\det
S)^{-1/2}}{\ii^{N/2}}\sum_{\substack{c\in\N\\d' \bmod mc\\(c,d')=1}}{
(mc)^{-k}\sum_{\nu
\bmod q(L)}\sum_{\substack{x\in
L\\x \bmod c}}{\exp\left(\pi\ii
m\frac{a(x,x)}{c}\right)}}&\\
&\times\sum_{
p\,:\,((S)^{-1}\Z^{N})/\Z^{N}}{e^{\pi\ii
\nu(p,p)}\Theta_{S,p,0}\left(\frac{\tau}{m} +\frac { d' } { mc } , \mathfrak {
z }-x/c\right)}\sum_{l\in\Z}{\left(\frac{\tau}{m}+\frac{d'}{mc}
+\nu+q(L)l\right)^{
N/2-k}}&\\
&=\frac{\left(-2\pi\ii /q(L)\right)^{k-N/2}}{
(k-N/2-1)! }\cdot\frac{(\det
S)^{-1/2}}{\ii^{N/2}}\sum_{\substack{c\in\N\\d' \bmod mc\\(c,d')=1}}{
(mc)^{-k}\sum_{\nu
\bmod q(L)}\sum_{\substack{x\in
L\\x \bmod c}}{\exp\left(\pi\ii
m\frac{a(x,x)}{c}\right)}}&\\
&\times\sum_{
p\,:\,((S)^{-1}\Z^{N})/\Z^{N}}{e^{\pi\ii
\nu(p,p)}\Theta_{S,p,0}\left(\frac{\tau}{m} +\frac { d' } { mc } , \mathfrak {
z }-x/c\right)}&\\
&\times\sum_{ r\in\N }{r^{ k-N/2-1}\exp\left(2\pi\ii
r\left(\frac{\tau}{mq(L)}+\frac{d'}{mq(L)c}+
\frac{\nu}{q(L)}\right)\right)} &\\
  \end{aligned}\end{eqnarray*}
whenever $N$ is even. This gives us a second description of the Fourier
coefficients.
\begin{theorem}\label{theorem:Fourier expansion for Ekm ala Theta
Transformation formula}
  Let $N=\operatorname{rank}(L)$ be even and $k,m\in\N$ with 
$k>2+N$ even. Then
$E_{k,m}$ is a holomorhic Jacobi form on $\mathcal{H}\times(L\otimes \C)$.
If we set \(\Delta(n,\lambda)=nm-\frac{1}{2}(\lambda,\lambda)\) the Fourier
expansion of $E_{k,m}$ is given by 
\[E_{k,m}(\tau,\mathfrak{z})=\Theta_{mS,0,0}(\tau,\mathfrak{z})+\sum_{n=1}^{
\infty } { \sum_ { \substack{\lambda\in
S^{-1}\Z^{N}\\nm>\frac{1}{2}(\lambda,\lambda)}}{\beta_{k,m}(n,\lambda)e^{2\pi\ii
n\tau}e^{2\pi\ii (\lambda,\mathfrak{z})}}}\]
with Fourier-Jacobi coefficients
\[\begin{aligned}\beta_{k,m}(n,\lambda)&=&&\frac{\left(-2\pi\ii/
q(L)\right)^{k-N/2}}{(k-N/2-1)!}\cdot \frac{\left(\det S\right)^{-1/2}}{\ii
^{N/2}}\left(q(L)\Delta(n,\lambda)\right)^{k-N/2-1}\sum_{
\substack{c\in\N\\d' \bmod mc\\(c,d')=1}}{(mc)^{-k}}&\\
&&&\times\qquad\sum_{\substack{x\in L\\x
\bmod c}}{\exp\left\{\frac{\pi\ii}{c}\left(
ma(x,x)-2(\lambda,x)+2nd'\right)\right\}}&\\
&&&\times\qquad	\sum_{\nu \bmod q(L)}{\exp\left(2\pi\ii
nm\nu\right)\exp\left(\pi\ii\nu
(\lambda+L,\lambda+L)\right)\exp\left(-\pi\ii\nu(\lambda,
\lambda)\right)} \,.& 
\end{aligned} \]
\end{theorem}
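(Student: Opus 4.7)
The plan is to pick up directly from the intermediate expression for $E_{k,m}^{\ast}$ derived just above the statement and to extract its Fourier coefficients in $\tau$ and $\mathfrak{z}$. The absolute convergence and holomorphicity that legitimise such an expansion are already supplied by Proposition~\ref{prop:convergence JE series}, and the summand $\Theta_{mS,0,0}(\tau,\mathfrak{z})$ arises from the $c=0$ part $E_{k,m}^{0}$ that has already been computed. So the only real task is to rewrite the explicit formula preceding the theorem as a Fourier series.

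The first step is to expand each theta term as
\[\Theta_{S,p,0}\!\left(\tfrac{\tau}{m}+\tfrac{d'}{mc},\mathfrak{z}-\tfrac{x}{c}\right)=\sum_{\mu\in\Z^{N}}e^{\pi\ii(\mu+p,\mu+p)_{S}\left(\tau/m+d'/(mc)\right)+2\pi\ii(\mu+p,\mathfrak{z}-x/c)_{S}}\]
and to set $\lambda:=\mu+p$. As $p$ ranges over representatives of $(S^{-1}\Z^{N})/\Z^{N}$ and $\mu$ over $\Z^{N}$, the vector $\lambda$ sweeps through $S^{-1}\Z^{N}=L^{\vee}$ exactly once, so the $p$-sum and the $\mu$-sum merge into a single sum over $\lambda\in L^{\vee}$. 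The factor $e^{\pi\ii\nu(p,p)}$ is unaffected by this substitution since $(\lambda,\lambda)_{S}-(p,p)_{S}=2(p,\mu)_{S}+(\mu,\mu)_{S}\in 2\Z$ (using $Sp\in\Z^{N}$ and evenness of $S$), so $e^{\pi\ii\nu(p,p)}=e^{\pi\ii\nu(\lambda,\lambda)}$ for integer $\nu$.

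Next I separate the genuine Fourier exponents from the scalar factors. Collecting the $\tau$-exponentials from both the theta series, $e^{\pi\ii(\lambda,\lambda)\tau/m}$, and the Lipschitz series, $e^{2\pi\ii r\tau/(mq(L))}$, produces $e^{2\pi\ii n\tau}$ precisely when $r=q(L)\Delta(n,\lambda)$. Substituting this value into $r^{k-N/2-1}$ yields the prefactor $(q(L)\Delta(n,\lambda))^{k-N/2-1}$ of the theorem, and the constraint $r\in\N$ translates into the inequality $nm>\tfrac{1}{2}(\lambda,\lambda)$ bounding the summation. The $\mathfrak{z}$-dependence, coming solely from the theta series, contributes exactly $e^{2\pi\ii(\lambda,\mathfrak{z})}$, while the $x/c$ piece of that factor drops into the scalar $e^{-2\pi\ii(\lambda,x)/c}$.

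What remains is to tidy up the scalar exponentials in $d'$, $x$ and $\nu$. The two $d'$-pieces, after $r=q(L)\Delta(n,\lambda)$ is inserted, collapse into $e^{2\pi\ii nd'/c}$; combined with $e^{\pi\ii ma(x,x)/c}$ and $e^{-2\pi\ii(\lambda,x)/c}$ this produces the inner character $\exp\{\tfrac{\pi\ii}{c}(ma(x,x)-2(\lambda,x)+2nd')\}$. The $\nu$-pieces $e^{\pi\ii\nu(\lambda,\lambda)}$ and $e^{2\pi\ii r\nu/q(L)}=e^{2\pi\ii\nu\Delta(n,\lambda)}$ condense to $e^{2\pi\ii nm\nu}$, which is equivalently written as $\exp(2\pi\ii nm\nu)\exp(\pi\ii\nu(\lambda+L,\lambda+L))\exp(-\pi\ii\nu(\lambda,\lambda))$ because $(\lambda+L,\lambda+L)\equiv(\lambda,\lambda)\pmod{2\Z}$ makes the last two factors cancel for $\nu\in\Z$. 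Assembling the prefactors then reproduces $\beta_{k,m}(n,\lambda)$ as stated. The main obstacle is purely bookkeeping: verifying that the reparametrisation $\lambda=\mu+p$ is compatible with every residual exponent (in particular with $e^{\pi\ii\nu(p,p)}$, which needs the mod-$2$ congruence above), and that the two independent integer summations—$r\in\N$ for the $\tau$-frequency and $\mu\in\Z^{N}$ for the $\lambda$-frequency—combine so that the pair $(n,\lambda)$ parameterises the Fourier expansion exactly once, without overcounting.
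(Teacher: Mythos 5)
Your overall route is the same as the paper's: both continue from the expression for $E_{k,m}^{\ast}$ obtained via the theta transformation formula and the Lipschitz series, merge the $p$- and $\mu$-sums into a single sum over $\lambda\in L^{\vee}=S^{-1}\Z^{N}$, and read off the coefficient of $e^{2\pi\ii n\tau}e^{2\pi\ii(\lambda,\mathfrak{z})}$ by solving $r=q(L)\Delta(n,\lambda)$. Your side remarks are sound: $(\lambda,\lambda)\equiv(p,p)\bmod 2\Z$ does justify replacing $e^{\pi\ii\nu(p,p)}$ by $e^{\pi\ii\nu(\lambda,\lambda)}$, and $q(L)\Delta(n,\lambda)$ is indeed a positive integer exactly when $nm>\tfrac12(\lambda,\lambda)$.

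There is, however, a genuine gap. The expression you start from is a sum over \emph{all} $r\in\N$ and all $\lambda$, so a priori $E_{k,m}^{\ast}$ has Fourier frequencies $\tfrac{1}{m}\bigl(\tfrac12(\lambda,\lambda)+\tfrac{r}{q(L)}\bigr)$ ranging over $\tfrac{1}{mq(L)}\Z_{>0}$, not just over $\Z$. By setting $r=q(L)\Delta(n,\lambda)$ for integer $n$ you only reorganise the integer-frequency part of this sum; you never show that the coefficients attached to the non-integral frequencies vanish, which is exactly what is needed to assert that the Fourier expansion is supported on pairs $(n,\lambda)$ with $n\in\N$. The paper closes this hole with a short but essential argument: $E_{k,m}$ and $\Theta_{mS,0,0}$, hence $E_{k,m}^{\ast}$, are invariant under $\tau\mapsto\tau+1$, so comparing coefficients forces
\[
\exp\Bigl(2\pi\ii\,\tfrac{\frac{q(L)}{2}(\lambda,\lambda)+r}{mq(L)}\Bigr)\beta=\beta ,
\]
i.e.\ $\beta\neq 0$ only when $\tfrac{q(L)}{2}(\lambda,\lambda)+r\equiv 0\bmod mq(L)$; one then checks that the surviving pairs are in bijection with $\{(n,\lambda):n\in\N,\ nm>\tfrac12(\lambda,\lambda)\}$. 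You should add this periodicity step (or an equivalent direct character-sum computation showing the non-integral coefficients vanish); as written, your reindexing silently discards part of the series rather than proving it is zero.
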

\begin{proof}
From the definition it is clear that $E_{k,m}$ is invariant under the
substitution $\tau\mapsto \tau +1$. Since $\Theta_{mS,0,0}$ satisfies the same
invariant property $E_{k,m}^{\ast}$ possesses it either. 
Thus a comparison of the Fourier coefficients yields 
\[\exp\left(2\pi\ii
\frac{\frac{q(L)}{2}(\lambda,\lambda)+n}{mq(L)}\right)\beta_{k,m}(n,
\lambda)=\beta_ { k , m } (n,\lambda)\quad\text{ for all}\quad
n\in\N\,,\,\lambda\in L^{\vee} \,.\]
Therefore $\beta_{k,m}(n,\lambda)\neq 0$ implies
$\frac{q(L)}{2}(\lambda,\lambda)+n\equiv 0 \bmod mq(L)$. One finally observes
that the two sets
\[\left\{\left(\frac{\frac{q(L)}{2}(\lambda,\lambda)+n}{mq(L)},
\lambda\right)\,;\,\lambda\in L^{\vee}\,,\,n\in\N
\quad,\quad\frac{q(L)}{2}(\lambda, \lambda)+n\equiv 0 \bmod mq(L)\right\} \]
and
\[\left\{\left(n,\lambda\right)\,;\,n\in\N\,,\,\lambda\in
L^{\vee}\,,\, nm>\frac{1}{2}(\lambda,\lambda)\right\}\]
coincide. This yields the desired Fourier expansion.
\end{proof}
If we restrict ourselves to the case $m=1$ we can refine the this formula. We
define 
\[Q^{\vee}(x):=Q_{n,\lambda}(x):=\frac{m(x,x)-2(\lambda,x)+2n}{2}\quad\text{
where}\quad n\in\N\,,\,\lambda\in L^{\vee} \,.\]
Then with the same calculation as used before (\ref{theorem:Fourier expansion
according to EZ}) we obtain a more explicit result where no integral has to be
calculated.
\begin{theorem}\label{theorem: Fourier expansion for m equals 1}
   Let $N=\operatorname{rank}(L)$ be even and $k\in\N$ where
$k>2+N$ is even. Then $E_{k,1}$ is a holomorhic Jacobi form on
$\mathcal{H}\times(L\otimes \C)$. The Fourier expansion
of $E_{k,1}$ is given by 
\[E_{k,1}(\tau,\mathfrak{z})=\Theta_{S,0,0}(\tau,\mathfrak{z})+\sum_{n=1}^{
\infty } { \sum_ { \substack{\lambda\in
S^{-1}\Z^{N}\\n>\frac{1}{2}(\lambda,\lambda)}}{\beta_{k,1}(n,\lambda)e^{2\pi\ii
n\tau}e^{2\pi\ii (\lambda,\mathfrak{z})}}}\]
where
\[\beta_{k,1}(n,\lambda)=\frac{\Delta^{k-N/2-1}}{\operatorname{vol}(L)}\frac{i^{k-N}
(-2\pi)^{ k-N/2}}{\zeta(k-N)(k-N/2-1)!}\sum_{a=1}^{\infty}{
\frac{N_{a}(Q^{\vee})}{a^{k-1}}} \]
\end{theorem}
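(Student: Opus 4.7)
The strategy is to take the formula of Theorem \ref{theorem:Fourier expansion for Ekm ala Theta Transformation formula}, specialize it to $m=1$, and re-package the Kloosterman-type character sum by the same Möbius/zeta-function trick that was used to prove Theorem \ref{theorem:Fourier expansion according to EZ}. Fixing $m=1$ simplifies both the outer summation range (now $d' \bmod c$ instead of $d' \bmod mc$) and the inner exponential, since all factors of $m$ disappear.

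The key manipulation is in the inner sum
\[\sum_{x \bmod c} \exp\!\left(\frac{\pi\ii}{c}\bigl(a(x,x) - 2(\lambda,x) + 2nd'\bigr)\right).\]
Using $\gcd(c,d')=1$, the substitution $x \mapsto d'x$ is a bijection of $(\Z/c\Z)^N$. Since $ad' \equiv 1 \pmod{c}$, the quadratic expression in $x$ transforms into $d'\bigl((x,x) - 2(\lambda,x) + 2n\bigr) = 2d'\,Q^{\vee}(x)$, so the inner sum becomes $\sum_{x \bmod c} e_c\bigl(d'\,Q^{\vee}(x)\bigr)$. This is exactly the structure encountered in the proof of Theorem \ref{theorem:Fourier expansion according to EZ}, with $Q$ replaced by $Q^{\vee}$.

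Next I would apply the identity $\sum_{d' \bmod c,\,(c,d')=1} e_c(d'N) = \sum_{a\mid c,\,a\mid N} \mu(c/a)\,a$, interchange summations, and invoke $\sum_{b\ge 1}\mu(b)b^{-s} = \zeta(s)^{-1}$ to re-express
\[\sum_{c,d',x} c^{-k}\, e_c\bigl(d'\,Q^{\vee}(x)\bigr) = \frac{1}{\zeta(k-N)}\sum_{a=1}^{\infty}\frac{N_a(Q^{\vee})}{a^{k-1}},\]
in direct analogy to the calculation preceding Theorem \ref{theorem:Fourier expansion according to EZ}. In parallel, the $\nu$-summation from Theorem \ref{theorem:Fourier expansion for Ekm ala Theta Transformation formula} reduces for $m=1$ to $\sum_{\nu \bmod q(L)} e^{2\pi\ii n\nu}$, since $(\lambda+L,\lambda+L) \equiv (\lambda,\lambda) \bmod 2\Z$ makes the two $\lambda$-dependent phases cancel; this yields a clean factor $q(L)$.

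Finally I would collect the constant prefactors: the $q(L)$ produced by the $\nu$-sum combines with the $(q(L)\Delta)^{k-N/2-1}$ and $(-2\pi\ii/q(L))^{k-N/2}$ prefactors to leave a pure $\Delta^{k-N/2-1}$, identify $(\det S)^{-1/2}$ with $1/\operatorname{vol}(L)$, and rewrite $(-2\pi\ii)^{k-N/2}/\ii^{N/2} = \ii^{k-N}(-2\pi)^{k-N/2}$ to match the stated formula. The main bookkeeping obstacle is tracking the phase factors and the powers of $q(L)$ to check that the final expression is genuinely independent of $q(L)$; a secondary point is verifying that the divisibility condition on $n$ modulo $q(L)$ inherited from Theorem \ref{theorem:Fourier expansion for Ekm ala Theta Transformation formula} is already encoded in the vanishing of $N_a(Q^{\vee})$ whenever the condition fails, so that the sum over all $(n,\lambda)$ with $n>\tfrac{1}{2}(\lambda,\lambda)$ in the statement is consistent.
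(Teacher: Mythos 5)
Your proposal is correct and is essentially the argument the paper intends: the paper itself only remarks that the result follows "with the same calculation as used before" Theorem \ref{theorem:Fourier expansion according to EZ}, applied to the $m=1$ specialization of Theorem \ref{theorem:Fourier expansion for Ekm ala Theta Transformation formula}, and you have filled in exactly those steps (the substitution $x\mapsto d'x$ yielding $e_c(d'Q^{\vee}(x))$, the Ramanujan-sum/M\"obius/zeta manipulation, the collapse of the $\nu$-sum to $q(L)$, and the cancellation of all powers of $q(L)$ in the prefactor). The bookkeeping you describe checks out, so no gap remains.
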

 
\section{An explicit formula for even unimodular lattices}\label{sec:L=E8}
We want to derive an explicit fomula for $e_{k,m}$ in this case. We will use several results from the theory of quadratic forms. Most of them are due to Siegel and can be found in his famous work ``\textit{Über die analytische Theorie der quadratischen Formen}'' \cite{Siegel}. Let $L$ again be an even integral positive definite lattice of rank $N$ and denote by $S$ the Gram matrix with respect to the standard basis of $\Z^{N}$. Then $L$ is a lattice in the $N$-dimensional $\Q$ vector space $V=L\otimes_{\Z}\Q$. For an integer $t$ we will write $L(t)$ for the lattice which arises from $L$ by rescaling the bilinear form by $t$. The associated quadratic form is given by
\[q\,:\,\Z^{N}\to\Z\quad,\quad q(x)=\frac{1}{2}(x,x)_{S}\,.\]
We write $|S|=\det S$. 
Now let $L'$ be another integral lattice in a finite dimensional $\Q$ vector space $V'$ with quadratic form $q'$. We say that $L$ is isometric to $L'$ if there exists a bijective $f\in\operatorname{Hom}_{\Z}(L,L')$ such that $q'(f(x))=q(x)$ for all $x\in L$. The notion of an isometry is readily generalized for $L$ being a module over a commutative ring with 1.
The \textit{orthogonal group of $L$} is given by
\[\operatorname{O}(L)=\{g\in\operatorname{GL}(L)\,|\,\forall x\in L:\,q(gx)=q(x)\}\,.\]
For a positive definite lattice $L$ the group $\operatorname{O}(L)$ turns out to be finite.

Let $p\in\mathbb{P}$ be a prime and denote by $\Q_{p}$ the p-adic completion of $\Q$ and by $\Z_{p}$ the integral p-adic numbers, i.e., the closure of $\Z$ in $\Q_{p}$. We write $\Q_{\infty}=\R$. By extending the coefficients of $V$ to $\Q_{p}$ we obtain the completions of $V$ with respect to the valuations $|\cdot|_{p}$ and denote them by $V_{p}=\Q_{p}\otimes_{\Q} V$ for $p\in\mathbb{P}\cup \{\infty\}$. Moreover for every place $p$ we define a $\Z_{p}$ lattice $L_{p}$ to be $L_{p}=\Z_{p}\otimes_{\Z} L$. Now we are able to introduce the notion of a genus. 
\begin{definition}
 The lattices $L$ and $L'$ belong to the same \textit{genus} if $L_{p}\cong L'_{p}$ for all $p\in\mathbb{P}\cup\{\infty\}$. This defines an equivalence relation on the set of all lattices in $V$. The \textit{genus of} $L$ is the equivalence class of $L$, i.e. \[\operatorname{genus}(L)=\{L'\,|\,\forall p\in\mathbb{P}\cup\{\infty\}:\,L_{p}\cong L'_{p}\}\,.\]
\end{definition} 
It is immedeately clear that isometric lattices belong to the same genus. If $L'$ is contained in the genus of $L$ the attached vector spaces $V$ and $V'$ are isomrphic. One can show that every genus contains only finitely many isometry classes of lattices. We will also refer to this quantity as the class number of $L$ written $\mathfrak{h}=\mathfrak{h}(L)$. A more detailed description of these facts can be found e.g. in \cite{Kneser}, chapter VII.

Let $L=L_{1},\dots,L_{\mathfrak{h}}$ be representatives of the classes of $L$ with quadratic forms $q_{i}$ and let $t$ be an integer. We denote by
\[r(t,L_{i})=\sharp\{x\in L\,|\,q_{i}(x)=t\}\]
the number of representations of $t$ by the quadratic form $q_{i}$. The \textit{mass} of the genus of $L$ is given by 
\[\operatorname{mass}(L)=\sum_{i=1}^{\mathfrak{h}(L)}{|\operatorname{O}(L_{i})|^{-1}}\]
and the weight of the class $L_{i}$ by 
\[w(L_{i})=|\operatorname{O}(L_{i})|^{-1}/\operatorname{mass}(L)\,.\]   
We define the number of representations of $t$ by the genus of $L$ as 
\[r(t,\operatorname{genus}(L))=\sum_{i=1}^{\mathfrak{h}(L)}{w(L_{i})r(t,L_{i})}\,.\]
This leads us to a special case of Siegel's main theorem on quadratic forms.
\begin{theorem}[Siegel]\label{theorem: Siegels Hauptsatz}
Let $L$ be an even positive definite lattice of rank $N\geq 2$ and $t$ be an integer. The number of representations of $t$ by the genus of $L$ is given by
\begin{equation}\label{mass formula}
 r(t,\operatorname{genus}(L))=\varepsilon_{N}\prod_{p\in\mathbb{P}\cup \{\infty\}}{\delta_{p}(t,L)}
\end{equation}
where $\varepsilon_{N}=1$ whenever $N>2$ and $\varepsilon_{2}=\frac{1}{2}$. 
The quantities $\delta_{p}(t,L)$ are called local densities and are defined as
\[\delta_{p}(t,L)=\lim_{U\to t}{\frac{\operatorname{vol}(q^{-1}(U))}{\operatorname{vol}(U)}}\]
where  $U$ ranges over $p$-adic neighbourhoods of $t$ and the volumes are taken with respect to the Haar measures on $\Z_{p}^{N}$ and $\Z_{p}$ respectively for $p\in\P$ and for $p=\infty$ the set $U$ ranges over real neighbourhoods of $t$ and the volumes are taken with respect to the Lebesgue measures on $\R^{N}$ and $\R$ respectively. The product in (\ref{mass formula}) converges absolutely for $N\geq 4$. 
\end{theorem}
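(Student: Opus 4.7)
The plan is to follow Siegel's original analytic strategy via theta series. Attach to each class representative $L_{i}$ the theta series
\[\theta_{L_{i}}(\tau)=\sum_{x\in L_{i}}{e^{2\pi\ii q_{i}(x)\tau}}=\sum_{t\geq 0}{r(t,L_{i})\,e^{2\pi\ii t\tau}},\]
and form the weighted genus average
\[\theta_{\operatorname{genus}(L)}(\tau)=\sum_{i=1}^{\mathfrak{h}(L)}{w(L_{i})\,\theta_{L_{i}}(\tau)},\]
whose $t$-th Fourier coefficient is by construction $r(t,\operatorname{genus}(L))$. Since the $L_{i}$ are locally isometric at every place they have equal rank, determinant and level, so all the $\theta_{L_{i}}$ lie in a common finite-dimensional space $M_{N/2}(\Gamma_{0}(q(L)),\chi)$ of (half-)integral weight modular forms.

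The crucial step is to identify $\theta_{\operatorname{genus}(L)}$ with an explicit Eisenstein series $E_{L}$ in that space. One constructs $E_{L}$ as a coset sum $\sum_{g}1|_{N/2}\,g$ over representatives fixing the cusp at infinity, exactly analogous to Definition \ref{Def:Jacobi E Series}. Then one shows that $\theta_{\operatorname{genus}(L)}$ is orthogonal to cusp forms, so the difference $\theta_{\operatorname{genus}(L)}-E_{L}$ vanishes. This can be done either by a direct Petersson inner-product unfolding, or adelically by writing $\theta_{\operatorname{genus}(L)}$ as a regularized integral over $[\operatorname{O}(V)]$ and unfolding to an Eisenstein series (the Siegel-Weil identity).

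Once the identification is established, one computes the Fourier coefficients of $E_{L}$ by the same Poisson-summation technique used for $E_{k,m}^{\ast}$ in Section \ref{section:formulae for FJ coefficients}: the coset sum over $(c,d)$ with $\gcd(c,d)=1$ splits into an exponential sum $\sum_{x\bmod c}{e^{2\pi\ii a\,q(x)/c}}$ times an archimedean Gaussian integral. Summing over $c$ and $d$ factorizes as an Euler product, whose $p$-th factor, via the standard identity
\[\delta_{p}(t,L)=\lim_{k\to\infty}{p^{-k(N-1)}\,\sharp\left\{x\in(\Z/p^{k}\Z)^{N}\,:\,q(x)\equiv t\bmod p^{k}\right\}},\]
matches $\delta_{p}(t,L)$, while the archimedean integral matches $\delta_{\infty}(t,L)$. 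Comparing Fourier coefficients of $\theta_{\operatorname{genus}(L)}$ and $E_{L}$ then yields the formula; the factor $\varepsilon_{2}=\frac{1}{2}$ arises because in rank two the weight-one Eisenstein space behaves exceptionally (only half of the Eisenstein cosets contribute distinctly).

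The hard part is the middle step: proving that the weighted genus theta series is genuinely an Eisenstein series, i.e.\ has no cuspidal component. This is the arithmetic content of the theorem and is equivalent to the Siegel-Weil identity; the first and third steps are by comparison routine once it is in hand. Absolute convergence of the product in (\ref{mass formula}) for $N\geq 4$ then follows from the standard bound $\delta_{p}(t,L)=1+O(p^{1-N/2})$ away from the finitely many primes dividing $2t\det(S)$.
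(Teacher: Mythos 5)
The paper does not actually prove this theorem: it is quoted from Siegel's \emph{\"Uber die analytische Theorie der quadratischen Formen} and used as a black box, the surrounding remark only recording Siegel's Hilfss\"atze on the stabilization of the counting ratio and on convergence. So there is no in-paper argument to compare against, and your proposal has to stand on its own; as written it is an outline of the standard theta/Eisenstein strategy rather than a proof. The decisive step --- that the weighted genus average $\theta_{\operatorname{genus}(L)}$ coincides with the Eisenstein series $E_{L}$ --- is precisely the arithmetic content of the theorem, and you only name the two known ways of establishing it (Petersson unfolding, Siegel--Weil) without carrying either out. Note also that ``orthogonal to cusp forms'' does not by itself force $\theta_{\operatorname{genus}(L)}-E_{L}=0$: you first need that this difference \emph{is} a cusp form, which requires computing and matching the constant terms of $\theta_{\operatorname{genus}(L)}$ and $E_{L}$ at every cusp of $\Gamma_{0}(q(L))$, not only at infinity. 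That computation, which rests on the full transformation behaviour of theta series in the spirit of Theorem \ref{theorem:theta trafo}, is a substantial part of any actual proof and is absent here.

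Two further points would need repair. First, the theorem is asserted for all $N\geq 2$, but for $N\leq 4$ the weight $N/2$ is at most $2$ and the coset sum $\sum_{g}1|_{N/2}\,g$ does not converge absolutely; one needs Hecke's trick or analytic continuation, and the cases $N=2,3$ (where the paper's remark imposes extra non-square hypotheses) are genuinely delicate --- your explanation of $\varepsilon_{2}=\tfrac{1}{2}$ as ``only half of the Eisenstein cosets contribute distinctly'' is not a correct account of where that factor comes from. Second, identifying the $p$-th Euler factor of the singular series with $\delta_{p}(t,L)$ requires proving that the normalized counts $p^{-k(N-1)}\,\sharp\{x\bmod p^{k}\,:\,q(x)\equiv t\}$ stabilize (Siegel's Hilfssatz 13, quoted in the paper's remark) and that the relevant exponential sums are multiplicative in the modulus; you assert the factorization but do not justify it. The architecture you describe is the right one, but every load-bearing step is deferred to a citation, so this cannot be accepted as a proof of the statement.
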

\begin{rem}\label{remark: siegls main theorem}
\begin{enumerate}[(1)]
 \item The convergence of the product in (\ref{mass formula}) was analyzed in \cite{Siegel}, Hilfssatz 15. The domain of convergence can be enlarged. In the case $N=2$ we additionally have to require that $|S|$ is not a square and in the case $N=3$ the quantity $2t|S|$ is required not to be a square.
 \item The local densities in the formula above can be computed explicitely. Let us first consider the case $p\in\P$. We consider the lattice $L(2)$ and denote the associated quadratic form by $q_{(2)}$. We regard $q_{(2)}$ as a map $q_{(2)}\,:\,\Z_{p}^{N}\to\Z_{p}$. Let $B_{r}(t)$ be a $p$-adic ball around $t$ of radius $0<r<1$, i.e., $B_{r}(t)=t+p^{a}\Z_{p}$ for some $a\in\N$. The above ratio turns out to be
\[\begin{aligned}
\frac{\operatorname{vol}(q_{(2)}^{-1}(B_{r}(t)))}{\operatorname{vol}(B_{r}(t))}&=&&\sum_{x\in\operatorname{Rep}(\Z_{p}^{N}/(p^{a}\Z_{p})^{N})}\frac{\operatorname{vol}\left\{x+(p^{a}\Z_{p})^{N}\,|\,q_{(2)}(x)\in t+(p^{a}\Z_{p})^{N}\right\}}{p^{-a}}&\\
&=&&p^{a}\sum_{x\in\operatorname{Rep}(\Z_{p}^{N}/(p^{a}\Z_{p})^{N})}{\operatorname{vol}\left(x+(p^{a}\Z_{p})^{N}\right)\cdot\chi_{p^{a}\Z_{p}}(q_{(2)}(x)-t)}&\\
&=&&p^{a(1-N)}\sum_{x\in\operatorname{Rep}(\Z_{p}^{N}/(p^{a}\Z_{p})^{N})}{\chi_{p^{a}\Z}(q_{(2)}(x)-t)}&\\
&=&&p^{a(1-N)}\,\sharp\{x\in\Z^{N}/p^{a}\Z^{N}\,|\,q_{(2)}(x)\equiv t \bmod p^{a}\}\,.&   
  \end{aligned}\]
Here $\chi$ denotes the characteristic function of a set. In \cite{Siegel}, Hilfssatz 13, it was shown that this ratio  stabilizes for $a>2\operatorname{ord}_{p}(2t)$. Hence the local densities are given by the formula
\begin{equation}\label{abstract formula for local densities}
 \delta_{p}(t,L(2))=p^{a(1-N)}\,\sharp\{x\in\Z^{N}/p^{a}\Z^{N}\,|\,q_{(2)}(x)\equiv t \bmod p^{a}\}\quad
\end{equation}
for $a>2\operatorname{ord}_{p}(2t)$ and $p\in\P\,.$ If $p\neq 2$ we immediately see from (\ref{abstract formula for local densities}) that $\delta_{p}(2t,L(2))= \delta_{p}(t,L)$ and one has $2\delta_{2}(2t,L(2))=\delta_{2}(t,L)$.
\item\label{remark: siegls main theorem:exact formula}
If $p$ is not a divisor of $2|S|$ we have an exact formula for the local densities. Let $l_{p}=\operatorname{ord}_{p}(t)$ and write $t=p^{l_{p}}t_{\bar{p}}$. In this case we always have 
\[ \delta_{p}(t,L(2))=p^{a(1-N)}\,\sharp\{x\in\Z^{N}/p^{a}\Z^{N}\,|\,q_{(2)}(x)\equiv t \bmod p^{a}\}\quad\text{ for  }a>\operatorname{ord}_{p}(t)\,.\]

We assume that $N$ is even. We put \[\varepsilon(p,L)=\left(\frac{(-1)^{N/2}|S|}{p}\right)\,.\]
The local density is given by the formula
\[\delta_{p}(2t,L(2))=\left(1-\varepsilon(p,L)p^{-N/2}\right)\left(1+\varepsilon(p,L)p^{1-N/2}+\dots+\varepsilon(p,L)^{l_{p}}p^{l_{p}(1-N/2)}\right)\,.
\] 
If $N$ is odd we put 
\[\varepsilon(p,L,t)=\left(\frac{(-1)^{(N-1)/2}2|S|t_{\bar{p}}}{p}\right)\,.\]
If $l_{p}\equiv 1 \bmod 2$ the formula is
\[\delta_{p}(2t,L(2))=(1-p^{1-N})(1+p^{2-N}+\dots+p^{(2-N)(\frac{l_{p}-1}{2})})\]
and if $l_{p}\equiv 0 \bmod 2$ we have
\[\delta_{p}(2t,L(2))=(1-p^{1-N})\left(1+p^{2-N}+\dots+p^{(2-N)(\frac{l_{p}}{2}-1)}+\frac{p^{(2-N)\frac{l_{p}}{2}}}{1-\varepsilon(p,L,t)p^{\frac{1-N}{2}}}\right)\,.\]
If $l_{p}=0$ the formula is $\delta_{p}(t,L)=(1-p^{1-N})/(1-\varepsilon(p,L,t)p^{\frac{1-N}{2}})$.

 According to \cite{Siegel}, Hilfssatz 26 and (71), the density at the place $p=\infty$ is
\[\delta_{\infty}(t,L)=(2\pi)^{N/2}\Gamma(\textstyle{\frac{N}{2}})^{-1}t^{\frac{N}{2}-1}|S|^{-\frac{1}{2}}\,.\]
\end{enumerate}
\end{rem}
Let us fix an even unimodular lattice of rank $N$ such that $N\equiv 0 \bmod 8$. We can reformulate the statement of Theorem \ref{theorem: Fourier expansion for m equals 1} by using the values of the Riemann's zeta function for even integers. These are given by
\[\zeta(2m)=(-1)^{m+1}\frac{(2\pi)^{2m}}{2\cdot(2m)!}B_{2m}\]
where $B_{2m}$ denotes the Bernoulli numbers. Hence we can rewrite the coefficients from the theroem as
 \begin{equation}\label{Bernoulli Zahlen als Vorfaktoren}
  \beta_{k,1}=-\frac{2(k-\textstyle\frac{N}{2})}{B_{\displaystyle k\textstyle-\frac{N}{2}}}\cdot\frac{\zeta(k-\frac{N}{2})}{\zeta(k-N)}\,\Delta^{\textstyle k-\frac{N}{2}-1}\sum_{a\in\N}{\frac{N_{a}(Q^{\vee})}{a^{k-1}}}
 \end{equation}
for $k$ being even and large enough. If $L$ is unimodular we can improve the formula given by Siegel.
\begin{lemma}\label{lemma:calculation of the local densities by means of Gaussian sums}
  Let $S\in\operatorname{Sym}(N,\Z)$ be an even unimodular matrix, i.e, $\det S=1$. Put $R=\frac{1}{2}S$. Let $p$ be a prime and $\Delta\in\Z$ and let $l\in\N$ be a positive integer. Then we have the following formula for the representation number
\[p^{l(1-N)}\Norm_{p^{l}}(R,\Delta)=\begin{cases}
                                     (1-p^{-N/2})\frac{1-p^{(1+\ord_{p}(\Delta))(1-\frac{N}{2})}}{1-p^{1-\frac{N}{2}}}&\text{ if }l>\ord_{p}(\Delta)\\
				      p^{l(1-\frac{N}{2})}+(1-p^{-N/2})\frac{1-p^{l(1-\frac{N}{2})}}{1-p^{1-\frac{N}{2}}}&\text{ if }l\leq\ord_{p}(\Delta)\\
				      1-p^{-\frac{N}{2}}&\text{ if }\Delta\text{ and }p\text{ are coprime}
                                    \end{cases}
\]
\end{lemma}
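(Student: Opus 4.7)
The plan is to open the counting function into a Gauss sum via orthogonality of additive characters modulo $p^{l}$:
\[
\Norm_{p^{l}}(R,\Delta)=\frac{1}{p^{l}}\sum_{c\bmod p^{l}}e^{-2\pi\ii c\Delta/p^{l}}\,G_{l}(c),\qquad G_{l}(c):=\sum_{x\bmod p^{l}}e^{2\pi\ii cR[x]/p^{l}}\,.
\]
Here $R[x]=x\transpose Rx=\tfrac{1}{2}(x,x)_{S}$ is integer-valued because $S$ is even. First I would sort the outer sum by $j=\ord_{p}(c)\in\{0,1,\dots,l\}$ (with $j=l$ capturing $c=0$), write $c=p^{j}c'$ with $\gcd(c',p)=1$, and use the substitution $x=y+p^{l-j}z$ to see that $G_{l}(c)=p^{jN}\,G_{l-j}(c')$, reducing everything to evaluating the primitive Gauss sum $G_{m}(c')$ with $\gcd(c',p)=1$.

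The central computational step is to show that $G_{m}(c')=p^{mN/2}$ for every prime $p$, every $m\geq 1$, and every $c'\in(\Z/p^{m}\Z)^{\times}$. For odd $p$ the matrix $S$ is unimodular over $\Z_{p}$ and $2$ is invertible mod $p^{m}$, so one diagonalises $S$ to $\operatorname{diag}(u_{1},\dots,u_{N})$ with $u_{1}\cdots u_{N}\in\Z_{p}^{\times 2}$ (because $\det S=1$), factors $G_{m}(c')$ into one-variable Gauss sums, and checks that the residual roots of unity combine to $1$ because $N\equiv 0\bmod 8$ absorbs the quartic phases. For $p=2$, the hypothesis $N\equiv 0\bmod 8$ guarantees that $S$ is isometric over $\Z_{2}$ to an orthogonal sum of $N/2$ hyperbolic planes, so the computation reduces to the elementary identity $\sum_{y_{1},y_{2}\bmod 2^{m}}e^{2\pi\ii c'y_{1}y_{2}/2^{m}}=2^{m}$ applied $N/2$ times.

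Inserting $G_{l}(c)=p^{jN+(l-j)N/2}$, the $c'$-sum produces the Ramanujan sum $c_{p^{m}}(\Delta)$, and reindexing $i=l-j$ yields
\[
p^{l(1-N)}\Norm_{p^{l}}(R,\Delta)=1+\sum_{i=1}^{l}p^{-iN/2}c_{p^{i}}(\Delta)\,.
\]
Using the standard evaluation of $c_{p^{i}}(\Delta)$ (equal to $p^{i-1}(p-1)$ for $i\leq\ord_{p}(\Delta)$, to $-p^{\ord_{p}(\Delta)}$ for $i=\ord_{p}(\Delta)+1$, and to $0$ for larger $i$), the resulting geometric series in $q:=p^{1-N/2}$ collapses: in the range $l>\ord_{p}(\Delta)$ a telescoping with the negative correction term at $i=\ord_{p}(\Delta)+1$ yields the factor $(1-p^{-N/2})(1-q^{\ord_{p}(\Delta)+1})/(1-q)$, matching both the first and the third case of the lemma; in the range $l\leq\ord_{p}(\Delta)$ the pure geometric sum plus the leading $1$ rearranges to $q^{l}+(1-p^{-N/2})(1-q^{l})/(1-q)$.

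The main obstacle is the Gauss-sum evaluation at $p=2$, where one cannot simply diagonalise $S$ and must instead invoke the $\Z_{2}$-classification of even unimodular lattices of rank divisible by $8$ together with a factorwise computation on hyperbolic planes. The remaining steps are routine manipulations of Ramanujan sums and geometric series.
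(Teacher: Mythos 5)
Your argument is correct: I checked the Gauss-sum evaluation $G_{m}(c')=p^{mN/2}$ (which for odd $p$ uses $8\mid N$ and $\det S=1$ to kill the quartic and quadratic-residue phases, and for $p=2$ uses that a positive definite even unimodular lattice is $2$-adically isometric to $\tfrac{N}{2}$ hyperbolic planes), and the Ramanujan-sum bookkeeping does reproduce all three cases of the stated formula. The paper omits its own proof but states that it proceeds by "calculations using Gaussian sums," so your route is essentially the intended one.
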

The proof includes some rather long and elementary calculations using Gaussian sums, so we decided to omit it.
The next formula is a reorganization of Siegels main theorem and can be found in \cite{Iwaniec}, (11.74).
\begin{prop}\label{Prop:reorganizationof siegels formula by Iwaniec}
 Let $S$ be an even lattice of rank $N\geq 4$ and assume that $N$ is even. Then we have the formula
 \begin{equation}\label{Iwaniec's formula}
  r(\Delta,\operatorname{genus}(S))=\frac{\delta_{\infty}(\Delta,S)}{L(N/2,\chi_{4D})}\left(\sum_{a|\Delta}{\chi_{4D}(a)a^{1-\frac{N}{2}}}\right)\,\prod_{p|2D}{\delta_{p}(\Delta,S)}
  \end{equation}
  where $D=(-1)^{\frac{N}{2}}|S|$ and $\chi_{4D}(a)=\left(\frac{4D}{a}\right)$ is the quadratic character and $L(s,\chi_{4D})$ denotes the associated Dirichlet L-series.
\end{prop}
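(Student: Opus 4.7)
The plan is to start from Siegel's main theorem (Theorem~\ref{theorem: Siegels Hauptsatz}) and reorganize the infinite Euler product of local densities by separating the ``bad'' primes dividing $2D$ from the remaining ones, then to recognize the contribution of the good primes as a ratio of a partial divisor sum and the Dirichlet $L$-series of $\chi_{4D}$. Since $N\geq 4$ is even we have $\varepsilon_{N}=1$, so Theorem~\ref{theorem: Siegels Hauptsatz} yields
\[r(\Delta,\operatorname{genus}(S))=\delta_{\infty}(\Delta,S)\,\prod_{p\mid 2D}\delta_{p}(\Delta,S)\,\cdot\,\prod_{p\nmid 2D}\delta_{p}(\Delta,S)\,.\]
Matching this with (\ref{Iwaniec's formula}), the task reduces to the identity
\[\prod_{p\nmid 2D}\delta_{p}(\Delta,S)=\frac{1}{L(N/2,\chi_{4D})}\sum_{a\mid\Delta}\chi_{4D}(a)a^{1-N/2}\,.\]

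For each prime $p\nmid 2D$ I would insert the closed formula from Remark~\ref{remark: siegls main theorem}(\ref{remark: siegls main theorem:exact formula}). Using $\delta_{p}(\Delta,S)=\delta_{p}(2\Delta,L(2))$ for odd $p$, and observing that for odd primes $p$ coprime to $2D$ the Kronecker symbol satisfies $\varepsilon(p,L)=\bigl(\tfrac{(-1)^{N/2}|S|}{p}\bigr)=\chi_{4D}(p)$, one arrives at
\[\delta_{p}(\Delta,S)=\bigl(1-\chi_{4D}(p)p^{-N/2}\bigr)\sum_{j=0}^{\ord_{p}(\Delta)}\chi_{4D}(p)^{j}p^{j(1-N/2)}\,.\]
This is precisely the expression that splits naturally into an ``$L$-factor'' $1-\chi_{4D}(p)p^{-N/2}$ and a ``divisor-sum factor''.

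Finally I would invoke the Euler product $L(N/2,\chi_{4D})^{-1}=\prod_{p}(1-\chi_{4D}(p)p^{-N/2})$, the factors at $p\mid 2D$ being trivial because $\chi_{4D}(p)=0$ there, and use multiplicativity of $a\mapsto\chi_{4D}(a)a^{1-N/2}$ to write
\[\sum_{a\mid\Delta}\chi_{4D}(a)a^{1-N/2}=\prod_{p\nmid 2D}\,\sum_{j=0}^{\ord_{p}(\Delta)}\chi_{4D}(p)^{j}p^{j(1-N/2)}\,,\]
where the $p\mid 2D$ local factors again collapse to $1$ for the same reason. Multiplying these two Euler-type products reproduces the right-hand side of the target identity factor by factor and completes the reorganization. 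The main subtlety is the uniform bookkeeping of the prime $2$ and the ramified odd primes $p\mid D$: these are absorbed into the ``bad'' factor $\prod_{p\mid 2D}\delta_{p}(\Delta,S)$ and are simultaneously cut out of the L-series and divisor sum by the vanishing of $\chi_{4D}$, so no case distinction beyond the $p\mid 2D$ versus $p\nmid 2D$ split is needed.
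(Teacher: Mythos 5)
Your argument is correct, but it is worth noting that the paper does not actually prove this proposition at all: it is quoted verbatim as formula (11.74) from Iwaniec's \emph{Topics in classical automorphic forms}, so there is no internal proof to compare against. What you supply is a self-contained derivation from material already present in the text, namely Siegel's main theorem (Theorem~\ref{theorem: Siegels Hauptsatz}, with $\varepsilon_N=1$ since $N\geq 4$) together with the exact good-prime local density formula of Remark~\ref{remark: siegls main theorem}(\ref{remark: siegls main theorem:exact formula}). The key observations you make are all sound: for odd $p\nmid 2D$ one has $\delta_p(\Delta,S)=\delta_p(2\Delta,L(2))$, the symbol $\varepsilon(p,L)=\bigl(\tfrac{(-1)^{N/2}|S|}{p}\bigr)$ agrees with $\chi_{4D}(p)=\bigl(\tfrac{4D}{p}\bigr)$ because $\bigl(\tfrac{4}{p}\bigr)=1$ for odd $p$, and the vanishing of $\chi_{4D}$ at primes dividing $2D$ lets the Euler product of $L(N/2,\chi_{4D})^{-1}$ and the multiplicative divisor sum $\sum_{a\mid\Delta}\chi_{4D}(a)a^{1-N/2}$ run formally over all primes while only the good primes contribute nontrivially; absolute convergence for $N\geq 4$ (guaranteed by the theorem, as $N/2\geq 2$) justifies the rearrangement into the two sub-products. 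Your route buys a verification that the quoted formula is consistent with the normalizations of the local densities used elsewhere in the paper, which is a genuine service since the paper later relies on this compatibility in the proofs of Theorem~\ref{theorem:Fourier coefficients for L even unimodular} and the $NA_1$ computation; the cost is that you inherit whatever caveats attach to the exact local density formula (it is only asserted for $p\nmid 2|S|$, which is exactly the set of primes you use it for, so no gap arises).
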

By virtue of these formulae we can state a sharper result for the Fourier coefficients.
\begin{theorem}
 \label{theorem:Fourier coefficients for L even unimodular}
 Let $L$ be an even unimodular lattice of rank $N$ and $k>2+N$ even and denote by $q$ the associated quadratic form. The Fourier expansion of $E_{k,1}$ is given by 
\[E_{k,1}(\tau,\mathfrak{z})=\Theta_{S,0,0}(\tau,\mathfrak{z})+\sum_{n=1}^{
\infty } { \sum_ { \substack{\lambda\in
L\\n>q(\lambda)}}{\beta_{k,1}(n,\lambda)e^{2\pi\ii
n\tau}e^{2\pi\ii (\lambda,\mathfrak{z})}}}\]
where 
\[\beta_{k,1}(n,\lambda)=-\frac{(2k-N)}{B_{k-\frac{N}{2}}}\,\sigma_{k-\frac{N}{2}-1}(\Delta)\,.\]
In particular $\beta_{k,1}$ is always rational and does only depend on the rank of $L$. 
\end{theorem}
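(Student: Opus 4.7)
The plan is to start from the Bernoulli--reorganised formula (\ref{Bernoulli Zahlen als Vorfaktoren}), specialise to the unimodular situation (so that $\operatorname{vol}(L)=1$ and $L^{\vee}=L$), and then show that the Dirichlet sum $\sum_{a}\Norm_{a}(Q^{\vee})/a^{k-1}$ matches the divisor function $\sigma_{k-N/2-1}(\Delta)$ prime by prime. Concretely, what remains to prove is the identity
\[
\Delta^{k-N/2-1}\sum_{a=1}^{\infty}\frac{\Norm_{a}(Q^{\vee})}{a^{k-1}}=\frac{\zeta(k-N)}{\zeta(k-N/2)}\,\sigma_{k-N/2-1}(\Delta)\,,
\]
since substituting this into (\ref{Bernoulli Zahlen als Vorfaktoren}) and using $2(k-N/2)=2k-N$ immediately yields $\beta_{k,1}=-(2k-N)B_{k-N/2}^{-1}\sigma_{k-N/2-1}(\Delta)$. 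The rationality statement and the claim that $\beta_{k,1}$ depends only on $N$ then read off from the right-hand side, because $\sigma_{k-N/2-1}(\Delta)\in\Q$, $B_{k-N/2}\in\Q$, and $\Delta=n-q(\lambda)$ is the only remaining input.

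The first step is to complete the square: $Q^{\vee}(x)=\tfrac{1}{2}(x-\lambda,x-\lambda)+\Delta$. Because $L$ is unimodular we have $\lambda\in L^{\vee}=L$, so the translation $y=x-\lambda$ is a bijection of $L/aL$, and therefore
\[
\Norm_{a}(Q^{\vee})=\#\{y\in L/aL\,:\,q(y)\equiv -\Delta\pmod{a}\}\,.
\]
By the Chinese Remainder Theorem $a\mapsto\Norm_{a}(Q^{\vee})$ is multiplicative, so the Dirichlet series factors as an Euler product $\prod_{p}L_{p}$ with local factor
\[
L_{p}=\sum_{l=0}^{\infty}p^{l(1-N)}\Norm_{p^{l}}(Q^{\vee})\cdot p^{l(N-k)}\,.
\]
The counts $\Norm_{p^{l}}(Q^{\vee})$ are precisely the representation counts controlled by Lemma \ref{lemma:calculation of the local densities by means of Gaussian sums} (with $R=\tfrac{1}{2}S$, and $-\Delta$ in place of $\Delta$, whose $p$-adic valuation is still $e:=\ord_{p}(\Delta)$).

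The main computational step is to reduce $L_{p}$ to closed form. Writing $A=p^{1-N/2}$, $B=p^{N-k}$ and inserting the two relevant branches of the lemma, the geometric sum over $l\leq e$ combined with the stabilised tail $l>e$ telescopes to
\[
L_{p}=\frac{1-(AB)^{e+1}}{1-AB}\cdot\frac{1-AB/p}{1-B}\,.
\]
Multiplying by $p^{e(k-N/2-1)}=(AB)^{-e}$, the first fraction becomes $\sum_{j=0}^{e}p^{j(k-N/2-1)}=\sigma_{k-N/2-1}(p^{e})$ and the second equals $(1-p^{-(k-N/2)})/(1-p^{-(k-N)})$; taking the product over all primes and invoking $\zeta(s)=\prod_{p}(1-p^{-s})^{-1}$ then produces the target identity. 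The main obstacle is precisely this telescoping: once the bracketed expression
\[
\frac{1-B^{e+1}}{1-B}-\frac{1-(AB)^{e+1}}{1-AB}+\frac{(1-A^{e+1})B^{e+1}}{1-B}
\]
is recognised to collapse to $(1-(AB)^{e+1})B(1-A)/[(1-B)(1-AB)]$, the remaining manipulations are routine, and no appeal to the full strength of Siegel's main theorem (Theorem \ref{theorem: Siegels Hauptsatz}) or of Iwaniec's reformulation (Proposition \ref{Prop:reorganizationof siegels formula by Iwaniec}) is needed.
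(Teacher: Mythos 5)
Your proposal is correct, and it reaches the stated formula by a genuinely more direct route than the paper. Both arguments rest on the same two ingredients: the reduction of $\sum_{a}\Norm_{a}(Q^{\vee})a^{1-k}$ to an Euler product (completing the square, which is legitimate here precisely because $\lambda\in L^{\vee}=L$, plus multiplicativity), and Lemma \ref{lemma:calculation of the local densities by means of Gaussian sums} for the local counts $p^{l(1-N)}\Norm_{p^{l}}$. From there the paper takes a detour: it isolates $\prod_{p}\delta_{p}(\Delta,L)$, converts that product into $r(\Delta,\operatorname{genus}(L))$ by Siegel's main theorem, evaluates the representation number with Iwaniec's reorganisation (Proposition \ref{Prop:reorganizationof siegels formula by Iwaniec}), and then still has to verify ``by a direct computation'' the identity
\[
\prod_{\substack{p\in\P\\p\mid\Delta}}\frac{(1-p^{-\frac{N}{2}})(1-p^{N-k})}{1-p^{\frac{N}{2}-k}}\,R_{p}(\Delta)=\frac{\sigma_{k-\frac{N}{2}-1}(\Delta)}{\sigma_{1-\frac{N}{2}}(\Delta)}\,\Delta^{1+\frac{N}{2}-k}\,,
\]
which is essentially the same telescoping of geometric series that you perform. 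You instead sum each local factor in closed form at once: with $A=p^{1-N/2}$, $B=p^{N-k}$, $e=\ord_{p}(\Delta)$, the first and third terms of your bracket combine over the common denominator $1-B$ to $(1-(AB)^{e+1})/(1-B)$, so the bracket does collapse to $(1-(AB)^{e+1})B(1-A)/[(1-B)(1-AB)]$ and $L_{p}=\frac{1-(AB)^{e+1}}{1-AB}\cdot\frac{1-p^{N/2-k}}{1-p^{N-k}}$; multiplying over all $p$ gives your global identity, and substituting into (\ref{Bernoulli Zahlen als Vorfaktoren}) cancels the zeta quotient and yields the theorem. What your route buys is the complete elimination of Siegel's theorem and Iwaniec's formula from the proof (they survive only implicitly, in that the Lemma's counts are the local densities); the cost is nil, since convergence of the Euler product for $k>N+2$ is immediate. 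Two small points are worth making explicit in a final write-up: the Lemma's formula depends on its argument only through $\ord_{p}$, so the sign change $\Delta\mapsto-\Delta$ produced by completing the square is harmless; and in the case $e=0$ your closed form correctly degenerates to $(1-p^{N/2-k})/(1-p^{N-k})$, matching the paper's computation for $p\nmid\Delta$.
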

\begin{proof}
 Since $N_{a}$ is multiplicative we have
 \[\sum_{a\in\N}{\frac{N_{a}(Q^{\vee})}{a^{k-1}}}=\prod_{p\in\P}{\sum_{\nu=0}^{\infty}{\frac{p^{\nu(1-N)}N_{p^{\nu}}(Q^{\vee})}{p^{\nu(k-N)}}}}\,.\]
 If $p$ is a prime not dividing $\Delta$ Lemma \ref{lemma:calculation of the local densities by means of Gaussian sums} yields
 \[\begin{aligned}
    \sum_{\nu=0}^{\infty}{\frac{p^{\nu(1-N)}N_{p^{\nu}}(Q^{\vee})}{p^{\nu(k-N)}}}=1-\delta_{p}(\Delta,L)+\frac{\delta_{p}(\Delta,L)}{1-p^{N-k}}&=&&\frac{1-p^{N-k}(1-\delta_{p}(\Delta,L))}{1-p^{N-k}}&\\&=&&\frac{1-p^{\frac{N}{2}-k}}{1-p^{N-k}}\,.&
   \end{aligned}\]
 If $(p,\Delta)>1$ the same Lemma leads to 
 \[\sum_{\nu=0}^{\infty}{\frac{p^{\nu(1-N)}N_{p^{\nu}}(Q^{\vee})}{p^{\nu(k-N)}}}=\delta_{p}(\Delta,L)R_{p}(\Delta)\]
where
\[\begin{aligned}
R_{p}(\Delta)&=&&\frac{p^{(\ord_{p}(\Delta)+1)(N-k)}}{1-p^{N-k}}&\\&+&&\frac{\frac{1-p^{(\ord_{p}(\Delta)+1)(N-k)}}{1-p^{N-k}}-\frac{1-p^{(\ord_{p}(\Delta)+1)(1+\frac{N}{2}-k)}}{1-p^{1+\frac{N}{2}-k}}}{1-p^{(\ord_{p}(\Delta)+1)(1-\frac{N}{2})}} &\\
&+&&\frac{(1-p^{1-\frac{N}{2}})(1-p^{(\ord_{p}(\Delta)+1)(1+\frac{N}{2}-k)})}{(1-p^{(\ord_{p}(\Delta)+1)(1-\frac{N}{2})})(1-p^{-\frac{N}{2}})(1-p^{1+\frac{N}{2}-k})}\,.&
  \end{aligned}
\]
 So the Dirichlet-series equals
 \[\begin{aligned}
    &\sum_{a\in\N}{\frac{N_{a}(Q^{\vee})}{a^{k-1}}}&\\
&=\frac{\zeta(k-N)}{\zeta(k-\frac{N}{2})}\,\prod_{\substack{p\in\P\\p|\Delta}}{\frac{1-p^{N-k}}{1-p^{\frac{N}{2}-k}}\delta_{p}(\Delta,L)R_{p}(\Delta)}&\\
    &=\frac{r\left(\Delta,\operatorname{genus}(L)\right)\Gamma(\frac{N}{2})}{(2\pi)^{\frac{N}{2}}\Delta^{\frac{N}{2}-1}}\,\frac{\zeta(k-N)}{\zeta(k-\frac{N}{2})}\,\prod_{\substack{p\in\P\\p|\Delta}}{\frac{1-p^{N-k}}{1-p^{\frac{N}{2}-k}}R_{p}(\Delta)}\cdot\prod_{\substack{p\in\P\\p\nmid\Delta}}{\delta_{p}(\Delta,L)^{-1}}&\\
    &=\frac{r\left(\Delta,\operatorname{genus}(L)\right)\Gamma(\frac{N}{2})}{(2\pi)^{\frac{N}{2}}\Delta^{\frac{N}{2}-1}}\,\frac{\zeta(k-N)\zeta(\frac{N}{2})}{\zeta(k-\frac{N}{2})}\,\prod_{\substack{p\in\P\\p|\Delta}}{\frac{(1-p^{N-k})(1-p^{-\frac{N}{2}})}{1-p^{\frac{N}{2}-k}}R_{p}(\Delta)}&
   \end{aligned}
\]
where we have used Siegels Theorem \ref{theorem: Siegels Hauptsatz}. If we apply (\ref{Bernoulli Zahlen als Vorfaktoren}) and the identity
\[\zeta\left(N/2\right)=-\frac{(2\pi)^{\frac{N}{2}}}{2\cdot(\frac{N}{2})!}B_{\frac{N}{2}}\]
we obtain
\[\beta_{k,1}(n,\lambda)=r\left(\Delta,\operatorname{genus}(L)\right)\frac{(\frac{2k}{N}-1)\cdot B_{\frac{N}{2}}}{B_{k-\frac{N}{2}}}\,\Delta^{k-N}\,\prod_{\substack{p\in\P\\p|\Delta}}{\frac{(1-p^{-\frac{N}{2}})(1-p^{N-k})}{1-p^{\frac{N}{2}-k}}\,R_{p}(\Delta)}\,.\]
We now use Proposition \ref{Prop:reorganizationof siegels formula by Iwaniec} to express $r\left(\Delta,\operatorname{genus}(L)\right)$. By assumption we have $D=(-1)^{N/2}|S|=1$ thus the quadratic character reads
\[\chi_{4D}(a)=\left(\frac{4}{a}\right)=\begin{cases}0&\text{ if }a \in 2\N\\
                                         1&\text{ if }a \in 2\N+1
                                        \end{cases}\]
such that
\[L(N/2,\chi_{4D})=\prod_{p\in\P_{>2}}(1-p^{-N/2})^{-1}=\zeta(N/2)\,(1-2^{-N/2})\] 
and
\[\sum_{a|\Delta}{\chi_{4D}(a)a^{1-N/2}}=\sigma_{1-N/2}(\Delta)\,\frac{1-2^{1-N/2}}{1-2^{(\ord_{2}(\Delta)+1)(1-N/2)}}
\]
Hence the number of representations of $\Delta$ by the genus of $L$ equals
\[\begin{aligned}
   r\left(\Delta,\operatorname{genus}(L)\right)&=&&\frac{\delta_{\infty}(\Delta,S)}{L(N/2,\chi_{4D})}\left(\sum_{a|\Delta}{\chi_{4D}(a)a^{1-\frac{N}{2}}}\right)\,\prod_{p|2D}{\delta_{p}(\Delta,S)}&\\
&=&&\frac{(2\pi)^{\frac{N}{2}}\Delta^{\frac{N}{2}-1}}{\Gamma(\frac{N}{2})}\,\frac{\sigma_{1-\frac{N}{2}}(\Delta)\,\delta_{2}(\Delta,S)}{\zeta(N/2)(1-2^{-N/2})}\,\frac{1-2^{1-N/2}}{1-2^{(\ord_{2}(\Delta)+1)(1-N/2)}}&\\
&=&&-\frac{N}{B_{N/2}}\,\Delta^{\frac{N}{2}-1}\,\sigma_{1-\frac{N}{2}}(\Delta)&
  \end{aligned}\]
This yields
\[\beta_{k,1}(n,\lambda)=-\frac{(2k-N)}{B_{k-\frac{N}{2}}}\,\Delta^{k-\frac{N}{2}-1}\,\sigma_{1-\frac{N}{2}}(\Delta)\,\prod_{\substack{p\in\P\\p|\Delta}}{\frac{(1-p^{-\frac{N}{2}})(1-p^{N-k})}{1-p^{\frac{N}{2}-k}}\,R_{p}(\Delta)}\,.\]
Now a direct computation shows that the identity
\[\prod_{\substack{p\in\P\\p|\Delta}}\frac{(1-p^{-\frac{N}{2}})(1-p^{N-k})}{1-p^{\frac{N}{2}-k}}\,R_{p}(\Delta)=\frac{\sigma_{k-\frac
{N}{2}-1}(\Delta)}{\sigma_{1-\frac{N}{2}}(\Delta)}\,\Delta^{1+\frac{N}{2}-k}\]
is valid which completes the proof.
\end{proof}

\section{The lattice $L=NA_{1}$}
We will now consider the next series of lattices. This is given by $N$ orthogonal copies of the root lattice $A_{1}$. The Gram matrix of $L$ is given by
\[S=\begin{pmatrix}
                2&0&\cdots&0\\0&2&\cdots&0\\\vdots&&\ddots&\vdots\\0&0&\cdots&2
               \end{pmatrix}\in\operatorname{Mat}(N,\Z)\,.
\]
 For this purpose let us recall the first description of the Fourier coefficients from Theorem \ref{theorem:Fourier expansion according to EZ} as
\[E_{k,m}=\Theta_{mS,0,0}(\tau,\mathfrak{z})+\sum_{\substack{n\in\Z\\\lambda\in
L}}{e_{k,m}(n,\lambda)e^{2\pi\ii
n\tau}e^{\pi\ii (\lambda,\mathfrak{z})}}\]
\[e_{k,m}(n,\lambda)=\frac{\gamma(n,\lambda)}{\zeta(k-N)}\,\sum_{a=1}^{\infty}{\frac{\mathfrak{D}_{a}(\lambda,-\Delta,NA_{1})}{a^{k-1}}
}\,.\]
The quantity $\gamma(n,\lambda)$ was given by a muti-dimensional integral. In our case it is not hard to evaluate it. As in \cite{Eichler-Zagier} this can be done by defoming the path of
integration and using Hankel's integral representation of $1/\Gamma$. The result is
\[\gamma(n,\lambda)=\begin{cases}
               0&\text{ if }\Delta\leq 0\\
		\frac{(-1)^{(3k/2)-N-1}\,\pi^{k-N/2}
}{2^{k-2}\,\Gamma(k-N/2)}\cdot \Delta^{k-N/2-1}&\text{ if
}\Delta> 0
              \end{cases}
\]  
where $\Delta=\Delta(n,\lambda)=4n-\frac{1}{2}(\lambda,\lambda)$  again denotes the hyperbolic norm. The reason for the additional factor 4 in $\Delta$ is the fact that the summation now ranges over all vectors $\lambda$ in the lattice instead of the dual lattice. But in this case the two quantities coincide since the Gram matrix of the dual lattice is 
\[S_{\left(NA_{1}\right)^{\vee}}=\begin{pmatrix}
                \frac{1}{2}&0&\cdots&0\\0&\frac{1}{2}&\cdots&0\\\vdots&&\ddots&\vdots\\0&0&\cdots&\frac{1}{2}
               \end{pmatrix}\,.
\]
The rest of this section is dedicated to the evaluation of the Dirchlet-series. We remind to the definition of $\mathfrak{D}_{a}(\lambda,\Delta,NA_{1})$ which was given by 
\[\mathfrak{D}_{a}(\lambda,\Delta,NA_{1})=\sharp\left\{x\bmod (a\Z)^{N}\,
;\,q(2x-\lambda)\equiv
\Delta \bmod 4a\right\}\]
By the mutiplicativity of $\mathfrak{D}_{a}(\Delta,NA_{1})$ we can reduce our discusion to $a$ being a prime power. 
The next lemma says that these quantities can be treated by investigating the local densities associated with $L=NA_{1}$. 
\begin{lemma}
 Let $p\in\P_{>2}$ be an odd prime and $\Delta=4n-\frac{1}{2}(\lambda,\lambda)\in\Z$  as above. For each $\lambda\in\Z^{N}$ and $l\in\N$ we have the identity
\[\mathfrak{D}_{p^{l}}(\lambda,-\Delta,NA_{1})=\frac{\omega(\lambda)}{\alpha(\lambda)}\, A_{N}(-\Delta,p^{l})\]
where
\[A_{N}(-\Delta,p^{l}):=\sharp\left\{x\bmod (p^{l}\Z)^{N}\,
;\,x_{1}^{2}+\dots+x_{N}^{2}\equiv
-\Delta \bmod p^{l}\right\}\]
and
\[\begin{aligned}
&\alpha(\lambda)=\sharp \left\{\sigma\in\{0,1\}^{N}\,;\,\sum_{i\in \{1,\dots N\}\backslash \{i\,:\,\lambda_{i}\equiv 0\bmod 2\}}{\sigma_{i}}\equiv \sum_{i\in \{i\,:\,\lambda_{i}\equiv 0\bmod 2\}}{\sigma_{i}}\pmod 4\right\}\,,&\\
&\omega(\lambda)=\sum_{\substack{j=1\\j\equiv -\Delta\bmod 4}}^{N}{\,\binom{N}{j}}\,.&   
  \end{aligned}\]
\end{lemma}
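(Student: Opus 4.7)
The plan is to decompose the defining congruence via the Chinese Remainder Theorem, exploiting that $\gcd(4,p^l)=1$ for every odd prime $p$. Thus the condition $q(2x-\lambda)\equiv -\Delta\pmod{4p^l}$ on $x\in(\Z/p^l\Z)^N$ splits into the pair of independent congruences $q(2x-\lambda)\equiv -\Delta\pmod{p^l}$ and $q(2x-\lambda)\equiv -\Delta\pmod 4$, which I would analyse separately.

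For the mod-$p^l$ piece I would use that $2$ is a unit in $\Z/p^l\Z$, so the affine substitution $y:=2x-\lambda$ is a bijection of $(\Z/p^l\Z)^N$. The congruence then reads $\sum_{i=1}^N y_i^2\equiv -\Delta\pmod{p^l}$, which has, by definition, exactly $A_N(-\Delta,p^l)$ solutions. For the mod-$4$ piece a direct expansion gives $(2x_i-\lambda_i)^2=4x_i^2-4x_i\lambda_i+\lambda_i^2\equiv\lambda_i^2\pmod 4$, independently of $x_i$. Summing, $q(2x-\lambda)\equiv o\pmod 4$ where $o=\#\{i:\lambda_i\text{ odd}\}$, since $\lambda_i^2$ is $0$ or $1$ mod $4$ according to the parity of $\lambda_i$. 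The relation $\Delta=4n-\sum_i\lambda_i^2$ then forces $-\Delta\equiv o\pmod 4$, so this mod-$4$ condition is automatically satisfied and imposes no further constraint on $x$.

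Combining these two observations yields $\mathfrak{D}_{p^l}(\lambda,-\Delta,NA_1)=A_N(-\Delta,p^l)$ in every relevant case, and it remains to recognise the prefactor $\omega(\lambda)/\alpha(\lambda)$ as equal to one. This combinatorial identity is the main obstacle. To evaluate $\alpha$, I would apply the involution $\sigma\mapsto\sigma'$ on $\{0,1\}^N$ which flips the entries indexed by $E=\{i:\lambda_i\equiv 0\bmod 2\}$ and leaves the others fixed; this turns the defining constraint $\sum_{i\in O}\sigma_i\equiv\sum_{i\in E}\sigma_i\pmod 4$ into the cleaner condition $|\sigma'|\equiv e\pmod 4$ with $e=N-o$. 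Hence $\alpha(\lambda)=\sum_{j\equiv e\bmod 4}\binom{N}{j}$, and the binomial symmetry $\binom{N}{j}=\binom{N}{N-j}$ rewrites this as $\sum_{j\equiv o\bmod 4}\binom{N}{j}$. Since $-\Delta\equiv o\pmod 4$, this sum coincides with $\omega(\lambda)$, so $\omega(\lambda)/\alpha(\lambda)=1$ and the claimed identity reduces to the content of the first three steps. Care must be taken with the boundary term $j=0$ to match the index convention for $\omega(\lambda)$, but this can be absorbed by the same roots-of-unity filter that justifies the above rewriting.
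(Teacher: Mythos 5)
Your argument is correct, and it reaches the conclusion by a genuinely different and more direct route than the paper. The paper never invokes the Chinese Remainder Theorem on the modulus $4p^{l}$; instead it introduces the intermediate count $X=\sharp\{x\bmod (2p^{l}\Z)^{N}\,;\,\sum x_{i}^{2}\equiv-\Delta\bmod 4p^{l}\}$ and proves the two identities $\alpha(\lambda)\,\mathfrak{D}_{p^{l}}(\lambda,-\Delta,NA_{1})=X$ and $\omega(\lambda)\,A_{N}(-\Delta,p^{l})=X$ by decomposing residues mod $2p^{l}$ as $2x-\lambda-\sigma$ (resp.\ $2x+\sigma$) with $\sigma\in\{0,1\}^{N}$, checking that only the $\sigma$ counted by $\alpha(\lambda)$ (resp.\ $\omega(\lambda)$) contribute and that each contributes equally; the quotient $\omega/\alpha$ then appears automatically, and the fact that it equals $1$ is deferred to the Corollary. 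Your CRT splitting, together with the observation that $(2x_{i}-\lambda_{i})^{2}\equiv\lambda_{i}^{2}\pmod 4$ makes the mod-$4$ congruence vacuous, yields $\mathfrak{D}_{p^{l}}(\lambda,-\Delta,NA_{1})=A_{N}(-\Delta,p^{l})$ in one step, and your involution-plus-$\binom{N}{j}=\binom{N}{N-j}$ argument supplies the combinatorial identity $\alpha(\lambda)=\omega(\lambda)$ that the paper only asserts ``by an elementary computation''; so you in effect prove the Corollary directly and recover the Lemma from it, whereas the paper proves the Lemma first and the Corollary afterwards. Your remark about the $j=0$ boundary term is well taken: with the literal lower limit $j=1$ in the displayed definition of $\omega(\lambda)$ the ratio $\omega/\alpha$ would fail to be $1$ exactly when $-\Delta\equiv 0\bmod 4$, and the paper's own part (b), where $\omega(\lambda)=\sharp\Omega(\lambda)$ manifestly counts $\sigma=0$, confirms that the intended sum runs over $j\geq 0$; stating this explicitly (rather than appealing vaguely to a roots-of-unity filter, which your involution argument does not actually use) would make your write-up airtight.
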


\begin{proof}
We first want to remark that $\omega(\lambda)$ is indeed well-defined since $\Delta$ depends only on $\lambda \bmod 4$. More precisely it suffices to consider the number $\sharp\{i\in\ \{1,\dots,N\}\,;\,\lambda_{i}\equiv 0 \bmod 2\}$ to determine the residue class of  $\lambda_{1}^{2}+\dots+\lambda_{N}^{2} \bmod 4$.  The proof is divided into two parts. In the first part we show $\alpha(\lambda)\mathfrak{D}_{p^{l}}(\lambda,-\Delta,NA_{1})=X$ and in the second part we prove $\omega(\lambda)A_{N}(-\Delta,p^{l})=X$
where
\[X=\sharp\left\{x\bmod (2p^{l}\Z)^{N}\,;\,x_{1}^{2}+\dots+x_{N}^{2}\equiv
 -\Delta \bmod 4p^{l}\right\}\,.\] 
\begin{enumerate}[(a)]
 \item We first note that for $x_{j},\lambda_{j}\in\Z$ we have
 \[\bigcup_{x_{j}:\Z/2p^{l}\Z}{(x_{j}+2p^{l}\Z)}=\bigcup_{x_{j}:\Z/p^{l}\Z}{(2x_{j}-\lambda_{j}+2p^{l}\Z)}\,\sqcup\,\bigcup_{x_{j}:\Z/p^{l}\Z}{(2x_{j}-\lambda_{j}-1+2p^{l}\Z)}\]
for $j=1,\dots,N$. Thus the formula
\[\begin{aligned}
   &&&\sharp\left\{x\bmod (2p^{l}\Z)^{N}\,;\,x_{1}^{2}+\dots+x_{N}^{2}\equiv
 -\Delta \bmod 4p^{l}\right\}&\\
&=&&\sum_{\sigma\in\{0,1\}^{N}}{\sharp\left\{x\bmod (p^{l}\Z)^{N}\,
;\,\sum_{j=1}^{N}(2x_{j}-(\lambda_{j}+\sigma_{j}))^{2}\equiv
-\Delta \bmod 4p^{l}\right\}}&
  \end{aligned}\]	
is correct. For each $\sigma$ we rewrite the summand as
\[\begin{aligned}
   S_{\sigma,\lambda}=&\sharp\left\{x\bmod (p^{l}\Z)^{N}\,
;\,4(x_{1}^{2}+\dots+x_{N}^{2})-4[x_{1}(\lambda_{1}+\sigma_{1})+\dots+x_{N}(\lambda_{N}+\sigma_{N})]\right.&\\
&\left.\equiv c(\sigma,\lambda)-4n \bmod 4p^{l}\right\}\,.&
  \end{aligned}\]
where
\[c(\sigma,\lambda)=-[\sigma_{1}(2\lambda_{1}+1)+\dots+\sigma_{N}(2\lambda_{N}+1)]\,.\]
Now $ S_{\sigma,\lambda}\neq 0$ implies $c(\lambda,\sigma)\equiv 0\bmod 4$.
Let us consider the set
\[A(\lambda)=\{\sigma\in\{0,1\}^{N}\,;\,c(\sigma,\lambda)\equiv 0\bmod 4\}\]
One immedeately sees that $c$ is periodic in $\lambda$ and more exactly we have $c(\sigma,\lambda+2)=c(\sigma,\lambda)$. Thus it suffices to consider $\lambda \bmod 2$. We put
\[J(\lambda)=\{i\in\ \{1,\dots,N\}\,;\,\lambda_{i}\equiv 0 \bmod 2\}\,.\] 
and observe that for each integer $a$ we have 
\[2a+1\equiv\begin{cases}
        1&\text{if } a\equiv 0 \bmod 2\\ 
        -1&\text{if } a\equiv 1 \bmod 2 
       \end{cases}\quad \pmod 4\,.\]
This yields
\[\begin{aligned}
c(\sigma,\lambda)\equiv -\sum_{i=1}^{N}{\sigma_{i}(2\lambda_{i}+1)} \equiv \sum_{i\in \{1,\dots N\}\backslash J(\lambda)}{\sigma_{i}}-\sum_{i\in J(\lambda)}{\sigma_{i}} \pmod 4   
  \end{aligned}
\]
and thus
\[A(\lambda)=\left\{\sigma\in\{0,1\}^{N}\,;\,\sum_{i\in \{1,\dots N\}\backslash J(\lambda)}{\sigma_{i}}\equiv \sum_{i\in J(\lambda)}{\sigma_{i}}\pmod 4\right\}\,.\]
Put $j(\lambda):=\sharp J(\lambda)$. Now for $0\leq r<4$ we have
\[\sharp\left\{\sigma\in\{0,1\}^{N}\,;\,\sum_{i\in J(\lambda)}{\sigma_{i}}\equiv r\bmod 4 \right\}=\sum_{\substack{i=1\\i\equiv \Delta\bmod 4}}^{j(\lambda)}{\,\binom{j(\lambda)}{i}}=\sum_{i=0}^{\lfloor \frac{j(\lambda)}{4}\rfloor}{\binom{j(\lambda)}{4i+r}}\]
This yields the following formula for $\alpha(\lambda)=\sharp A(\lambda)$
\begin{equation}\label{formula for alpha(lambda)}
 \alpha(\lambda)=\sum_{r=0}^{\min\{3,j(\lambda),N-j(\lambda)\}}\sum_{i=0}^{\lfloor \frac{j(\lambda)}{4}\rfloor}{\binom{j(\lambda)}{4i+r}}\cdot\sum_{i=0}^{\lfloor \frac{N-j(\lambda)}{4}\rfloor}{\binom{N-j(\lambda)}{4i+r}}
\end{equation}

Now let $u\in\Z$  such that $2u\equiv 1 \bmod p^{l}$. For each $\sigma\in A(\lambda)$ we have
\[\begin{aligned}
   S_{\sigma,\lambda}&=&&\sharp\left\{x\bmod (p^{l}\Z)^{N}\,
;\,(x_{1}^{2}+\dots+x_{N}^{2})-[x_{1}(\lambda_{1}+\sigma_{1})+\dots+x_{N}(\lambda_{N}+\sigma_{N})]\right.&\\
&&&\left.\equiv
u^{2}c(\sigma,\lambda)-n \bmod p^{l}\right\}&\\
&=&&\sharp\left\{x\bmod (p^{l}\Z)^{N}\,
;\,(x_{1}-u(\lambda_{1}+\sigma_{1}))^{2}+\dots+(x_{N}-u(\lambda_{N}+\sigma_{N}))^{2}\right.&\\
&&&\left.\equiv
u^{2}[\lambda_{1}^{2}+\dots+\lambda_{N}^{2}-4n] \bmod p^{l}\right\}&\\
&=&&\sharp\left\{x\bmod (p^{l}\Z)^{N}\,
;\,x_{1}^{2}+\dots+x_{N}^{2}\equiv
\lambda_{1}^{2}+\dots+\lambda_{N}^{2}-4n \bmod p^{l}\right\}&
  \end{aligned}
\]
since $u$ is a unit in $\Z/p^{l}\Z$.
Obviously $\sigma_{0}:=(0,\dots,0)$ is always an element of $A(\lambda)$ and we have
\[S_{\sigma_{0},\lambda}=\mathfrak{D}_{p^{l}}(\lambda,-\Delta,NA_{1})\,.\]
But the above calculations show that $S_{\sigma,\lambda}=S_{\sigma_{0},\lambda}=\mathfrak{D}_{p^{l}}(\lambda,-\Delta,NA_{1})$ for all $\sigma\in A(\lambda)$ such that 
\[\alpha(\lambda)\,\mathfrak{D}_{p^{l}}(\lambda,-\Delta,NA_{1})=\sharp\left\{x\bmod (2p^{l}\Z)^{N}\,;\,x_{1}^{2}+\dots+x_{N}^{2}\equiv
 -\Delta \bmod 4p^{l}\right\}\,.\]
\item One can use a similiar decomoposition as in (a) and obtains
\[\begin{aligned}
&\sharp\left\{x\bmod (2p^{l}\Z)^{N}\,;\,x_{1}^{2}+\dots+x_{N}^{2}\equiv
 -\Delta \bmod 4p^{l}\right\}&\\
&=\sum_{\sigma\in\{0,1\}^{N}}{\sharp\left\{x\bmod (p^{l}\Z)^{N}\,
;\,(2x_{1}+\sigma_{1})^{2}+\dots+(2x_{N}+\sigma_{N})^{2}\equiv
-\Delta \bmod 4p^{l}\right\}}\,.&   
  \end{aligned}
\]
We set 
\[\Omega(\lambda)=\{\sigma\in\{0,1\}^{N}\,|\,\sigma_{1}+\dots+\sigma_{N}\equiv-\Delta\bmod 4\}\,.\]
Note that  $\omega(\lambda)=\sharp\Omega(\lambda)$ is the number of all summands which equal $\Delta \bmod 4$ , i.e.
\[\omega(\lambda)=\sum_{\substack{j=1\\j\equiv -\Delta\bmod 4}}^{N}{\,\binom{N}{j}}=\sum_{j=0}^{\lfloor\frac{N}{4}\rfloor}{\binom{N}{4j+r}}\]
where we have written $-\Delta=4D+r$ such that $0\leq r<4$.  Let again $u\in\Z$ such that $2u\equiv1 \bmod p^{l}$. As above denote by $S_{\sigma}$ the summands which now only depend on $\sigma$. We observe that
\[S_{\sigma}\neq 0\implies \sigma_{1}+\dots+\sigma_{N}\equiv \Delta \bmod 4\]
and this yields
\[\begin{aligned}
   &\sharp\left\{x\bmod (2p^{l}\Z)^{N}\,;\,x_{1}^{2}+\dots+x_{N}^{2}\equiv
 -\Delta \bmod 4p^{l}\right\}&\\
&=\sum_{\sigma\in\Omega(\Delta)}{\sharp\left\{x\bmod (p^{l}\Z)^{N}\,
;\,(2x_{1}+\sigma_{1})^{2}+\dots+(2x_{N}+\sigma_{N})^{2}\equiv
-\Delta \bmod 4p^{l}\right\}}&\\
&=\sum_{\sigma\in\Omega(\Delta)}{\sharp\left\{x\bmod (p^{l}\Z)^{N}\,
;\,(x_{1}+\sigma_{1}u)^{2}+\dots+(x_{N}+\sigma_{N}u)^{2}\equiv
-u^{2}\Delta \bmod p^{l}\right\}}&\\
&=\omega(\Delta)\,\sharp\left\{x\bmod (p^{l}\Z)^{N}\,
;\,x_{1}^{2}+\dots+x_{N}^{2}\equiv
-u^{2}\Delta \bmod p^{l}\right\}\,.&
  \end{aligned}\]
Since $u$ is a unit in $\Z/p^{l}\Z$ we infer that 
\[\sharp\left\{x\bmod (p^{l}\Z)^{N}\,
;\,x_{1}^{2}+\dots+x_{N}^{2}\equiv
-u^{2}\Delta \bmod p^{l}\right\}\] 
equals
\[\sharp\left\{x\bmod (p^{l}\Z)^{N}\,
;\,x_{1}^{2}+\dots+x_{N}^{2}\equiv
-\Delta \bmod p^{l}\right\}\]
which finally yields
\[\omega(\lambda)\,A_{N}(-\Delta,p^{l})=\sharp\left\{x\bmod (2p^{l}\Z)^{N}\,;\,x_{1}^{2}+\dots+x_{N}^{2}\equiv
 -\Delta \bmod 4p^{l}\right\}\]
\end{enumerate}

\end{proof}

In fact one can show that the quotient $\frac{\omega(\lambda)}{\alpha(\lambda)}$ is always 1. Thus we obtain the following 
\begin{cor}\label{cor:coefficients of D series NA1 are local densities}
  Let $p\in\P_{>2}$ be an odd prime and $\Delta=4n-\frac{1}{2}(\lambda,\lambda)\in\Z$  as above. For each $\lambda\in\Z^{N}$ and $l\in\N$ we have the identity
\[\mathfrak{D}_{p^{l}}(\lambda,-\Delta,NA_{1})= A_{N}(-\Delta,p^{l})\,.\]
\end{cor}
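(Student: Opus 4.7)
The strategy is to use the preceding lemma to reduce the corollary to the purely combinatorial identity $\omega(\lambda)=\alpha(\lambda)$, which has no dependence on $p$ or $l$. Once this identity is established, the lemma immediately yields $\mathfrak{D}_{p^{l}}(\lambda,-\Delta,NA_{1})=A_{N}(-\Delta,p^{l})$. This reduction isolates the genuinely number-theoretic content (captured by $A_{N}$) from the elementary modulo $4$ bookkeeping.

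First I would observe that both $\alpha(\lambda)$ and $\omega(\lambda)$ depend on $\lambda$ only through $N$ and $j:=\sharp J(\lambda)$, the number of even components of $\lambda$. Setting $j^{c}=N-j$ and computing $\frac{1}{2}(\lambda,\lambda)=\sum\lambda_{i}^{2}$ modulo $4$ shows $-\Delta\equiv j^{c}\pmod 4$. Hence one can rewrite
\[\alpha(\lambda)=\sharp\{\sigma\in\{0,1\}^{N}:s(\sigma)\equiv t(\sigma)\bmod 4\},\quad\omega(\lambda)=\sharp\{\sigma\in\{0,1\}^{N}:s(\sigma)+t(\sigma)\equiv j^{c}\bmod 4\},\]
where $s(\sigma)=\sum_{i\notin J(\lambda)}\sigma_{i}$ and $t(\sigma)=\sum_{i\in J(\lambda)}\sigma_{i}$.

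The main step is to apply the roots-of-unity filter with $\zeta=\ii$ to the generating function $(1+x)^{j^{c}}(1+y)^{j}$, which yields
\[\alpha(\lambda)=\frac{1}{4}\sum_{k=0}^{3}(1+\ii^{k})^{j^{c}}(1+\ii^{-k})^{j},\qquad\omega(\lambda)=\frac{1}{4}\sum_{k=0}^{3}\ii^{-kj^{c}}(1+\ii^{k})^{N}.\]
The $k=0$ terms each equal $2^{N}$ and the $k=2$ terms vanish, so only $k=1$ and $k=3$ need to be compared. Using $1\pm\ii=\sqrt{2}\,e^{\pm\ii\pi/4}$, a short polar computation collapses the $k\in\{1,3\}$ contributions to $\alpha(\lambda)$ into $2^{N/2+1}\cos(\pi(N-2j)/4)$ and those to $\omega(\lambda)$ into $2^{N/2+1}\cos(\pi(N-2j^{c})/4)$. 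Since $N-2j^{c}=-(N-2j)$ and cosine is even, the two expressions coincide.

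The only mildly delicate point, and where I would be most careful, is the modulo $4$ bookkeeping: one must check that $-\Delta\bmod 4$ really does equal $j^{c}\bmod 4$ and that the summation range in the definition of $\omega(\lambda)$ must be interpreted as including $j'=0$ when $-\Delta\equiv 0\pmod 4$, so that the contribution from $\sigma=0$ is counted. Once these index conventions are pinned down, the remaining calculation is entirely elementary.
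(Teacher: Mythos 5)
Your proof is correct and follows the same route as the paper: reduce the corollary, via the preceding lemma, to the combinatorial identity $\omega(\lambda)=\alpha(\lambda)$, which the paper merely asserts follows ``by an elementary computation.'' Your roots-of-unity filter with $\zeta=\ii$ (and the check that $-\Delta\equiv N-j(\lambda)\pmod 4$, plus the observation about the $j'=0$ term in the definition of $\omega$) is a clean way to actually carry out that omitted computation, and the polar-form comparison of the $k=1,3$ terms is verified correctly.
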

\begin{proof}
 One shows that
\[\sum_{r=0}^{\min\{3,j(\lambda),N-j(\lambda)\}}\sum_{i=0}^{\lfloor \frac{j(\lambda)}{4}\rfloor}{\binom{j(\lambda)}{4i+r}}\sum_{k=0}^{\lfloor \frac{N-j(\lambda)}{4}\rfloor}{\binom{N-j(\lambda)}{4k+r}}=\sum_{j=0}^{\lfloor\frac{N}{4}\rfloor}{\binom{N}{4j+r}}\]
by an elementary computation.
\end{proof}


The case $p=2$ has to be treated separetely. Here we make use of the simple fact that
\[\mathfrak{D}_{2^{l}}(\lambda,-\Delta,NA_{1})=\sharp\left\{x\bmod (2^{l}\Z)^{N}\,
;\,(x_{1}^{2}-\lambda_{1}x_{1})+\dots+(x_{N}^{2}-\lambda_{N}x_{N})\equiv
-n \,(2^{l})\right\}\,.\] 
Now we can use some facts about Gaussian sums with a half integral shift which can be found in \cite{Berndt-Evans-Williams}, e.g.
The following two Propositions contain explicit formulas for the $\mathfrak{D}_{p^{l}}(\lambda,-\Delta,NA_{1})$. The proofs make use of generalized Gaussian sums. Since the calculations needed here are very tedious especially in the case $p=2$ we decided to omit them at this point. We start with the formulae for odd $p$.
\begin{prop}\label{prop:local densities for sum of N squares and p odd prime}
 Let $p$ be an odd prime and let $l\in\N$ be a positive integer. Assume that $N$ is even. Then we have the following formula for the local densities $p^{l(1-N)}A_{N}(-\Delta,p^{l})$
\[\begin{cases}
                                     (1-\varepsilon(p,NA_{1})\,p^{-N/2})\frac{1-\varepsilon(p,NA_{1})^{1+\ord_{p}(\Delta)}\,p^{(1+\ord_{p}(\Delta))(1-\frac{N}{2})}}{1-\varepsilon(p,NA_{1})\,p^{1-\frac{N}{2}}}&\text{ if }l>\ord_{p}(\Delta)\\
				      \varepsilon(p,NA_{1})^{l}\,p^{l(1-\frac{N}{2})}+(1-\varepsilon(p,NA_{1})\,p^{-N/2})\frac{1-\varepsilon(p,NA_{1})^{l}\,p^{l(1-\frac{N}{2})}}{1-\varepsilon(p,NA_{1})\,p^{1-\frac{N}{2}}}&\text{ if }l\leq\ord_{p}(\Delta)\\
				      1-\varepsilon(p,NA_{1})\,p^{-\frac{N}{2}}&\text{ if }(\Delta,p)=1
                                    \end{cases}\]
where we have used the abbreviation
\[\varepsilon(p,NA_{1})=\left(\frac{(-1)^{N/2}\det (NA_{1})}{p}\right)=\left(\frac{(-1)^{N/2}}{p}\right)\,.\]
\end{prop}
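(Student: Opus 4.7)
The plan is to evaluate the count $A_{N}(-\Delta,p^{l})$ by Gaussian sums, exploiting the fact that it is a local density for the sum-of-squares form $x_{1}^{2}+\dots+x_{N}^{2}$ associated with $NA_{1}$. The decisive point is that for $p$ odd and $N$ even, the $N$-th power of a quadratic Gauss sum becomes a scalar independent of the character, so the computation collapses onto a geometric series in the parameter $\varepsilon(p,NA_{1})\,p^{1-N/2}$.

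First I would apply additive-character orthogonality to write
\[
A_{N}(-\Delta,p^{l})=\frac{1}{p^{l}}\sum_{t\bmod p^{l}}\exp\!\left(\tfrac{2\pi\ii t\Delta}{p^{l}}\right)\bigl(S(t,p^{l})\bigr)^{N},\qquad S(t,p^{l})=\sum_{x\bmod p^{l}}\exp\!\left(\tfrac{2\pi\ii tx^{2}}{p^{l}}\right),
\]
and then stratify the outer sum by $s=v_{p}(t)$. Writing $t=p^{s}u$ with $u$ a unit modulo $p^{l-s}$ when $s<l$, one checks $S(t,p^{l})=p^{s}\,G(u,p^{l-s})$ for the standard quadratic Gauss sum $G$. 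The classical evaluations for odd $p$ give $G(u,p^{m})^{2}=\left(\tfrac{-1}{p}\right)^{m}p^{m}$ and $G(u,p^{m})=\left(\tfrac{u}{p}\right)^{m}G(1,p^{m})$, and since $N$ is even these combine into
\[
G(u,p^{m})^{N}=\varepsilon(p,NA_{1})^{m}\,p^{mN/2},
\]
a quantity independent of $u$. The inner sum over units of $\exp(2\pi\ii u\Delta/p^{l-s})$ then becomes the Ramanujan sum $c_{p^{l-s}}(\Delta)$, which equals $p^{l-s-1}(p-1)$ when $p^{l-s}\mid\Delta$, equals $-p^{l-s-1}$ when $\ord_{p}(\Delta)=l-s-1$, and vanishes otherwise.

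Setting $j=l-s$ one arrives at the compact intermediate identity
\[
p^{l(1-N)}A_{N}(-\Delta,p^{l})=1+\sum_{j=1}^{\min(l,\,\ord_{p}(\Delta)+1)}\varepsilon(p,NA_{1})^{j}\,p^{-jN/2}\,c_{p^{j}}(\Delta).
\]
In the three regimes of the proposition this is a finite geometric series in $\varepsilon(p,NA_{1})\,p^{1-N/2}$, possibly topped by the single exceptional $-p^{l-s-1}$ Ramanujan contribution when $l>\ord_{p}(\Delta)$; summing and telescoping produces exactly the three displayed closed forms. The main obstacle is bookkeeping rather than conceptual difficulty: one must track the Gauss-sum phase $\varepsilon^{m}$ correctly for both parities of $m$, and then verify the algebraic identity that collapses the geometric series plus the exceptional Ramanujan term into the stated expression in each regime. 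This is the same type of computation that yielded Lemma \ref{lemma:calculation of the local densities by means of Gaussian sums} in the unimodular case, now adjusted for the non-trivial character $\varepsilon(p,NA_{1})=\left(\tfrac{(-1)^{N/2}}{p}\right)$.
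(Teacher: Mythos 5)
Your proposal is correct and follows exactly the route the paper indicates: the paper explicitly states that this proposition is proved via generalized Gaussian sums but omits the computation as tedious, and your orthogonality/Gauss-sum/Ramanujan-sum argument (with the key observation that $G(u,p^{m})^{N}=\varepsilon(p,NA_{1})^{m}p^{mN/2}$ for $N$ even) supplies precisely those omitted details. I verified that your intermediate identity $p^{l(1-N)}A_{N}(-\Delta,p^{l})=1+\sum_{j}\varepsilon^{j}p^{-jN/2}c_{p^{j}}(\Delta)$ collapses to the three displayed closed forms in each regime, so the argument is complete.
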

The next Proposition treats the case $p=2$ which turns out to be more complicated.
\begin{prop}\label{prop:local densities for sum of N squares and p equals 2}
  Let $N$ be an even integer and let $l\in\N$. We write
\[\Delta=4n-\frac{1}{2}(\lambda,\lambda)\]
\begin{enumerate}[(a)]
 \item If not all $\lambda_{k}$ are equivalent 
$\bmod\,2$ we have 
\[2^{l(1-N)}\mathfrak{D}_{2^{l}}(\lambda,-\Delta,NA_{1})=1\,.\]
 \item If all $\lambda_{k}\equiv 1 \bmod 2$ we have
\[2^{l(1-N)}\mathfrak{D}_{2^{l}}(\lambda,-\Delta,NA_{1})=1+(-1)^{-n}\,.\]
 \item Put $\kappa:=-\Delta/4$. Assume that all $\lambda_{k}\equiv 0 \bmod 2$. If $2$ and $\kappa$ are coprime then $2^{l(1-N)}\mathfrak{D}_{2^{l}}(\lambda,-\Delta,NA_{1})$ equals
\[\begin{cases}
                                    1-(-1)^{\frac{N}{4}+\frac{\kappa}{2}}2^{1-\frac{N}{2}}&\text{ if }l\geq 2 \text{ and }N\equiv 2\bmod 4\\
				     1&\text{ if } l=1\quad\text{ or }\quad l\geq 2\text{ and }N\equiv 0\bmod 4
                                   \end{cases}\]
If $l\leq\ord_{2}(\kappa)$ then $2^{l(1-N)}\mathfrak{D}_{2^{l}}(\lambda,-\Delta,NA_{1})$ equals
\[\begin{cases}
                                            1-(-1)^{\frac{N}{4}}\,\,\frac{1-2^{(l-1)(1-\frac{N}{2})}}{1-2^{\frac{N}{2}-1}}&\text{ if }N\equiv 0\bmod 4\\
					    1 &\text{ if }N\equiv 2 \bmod 4
                                        \end{cases}\]
If $l=\ord_{2}(\kappa)+1$ then $2^{l(1-N)}\mathfrak{D}_{2^{l}}(\lambda,-\Delta,NA_{1})$ equals
\[\begin{cases}
                                          1 &\text{ if }N\equiv 2\bmod 4\\
					  1+(-1)^{\frac{N}{4}}2^{\ord_{2}(\kappa)(1-\frac{N}{2})}\left\{\frac{2^{\ord_{2}(\kappa)(\frac{N}{2}-1)}-1}{2^{\frac{N}{2}-1}-1}-2\right\}&\text{ if }N\equiv 0\bmod 4
                                         \end{cases}\]
and if $l\geq\ord_{2}(\kappa)+2$ then $2^{l(1-N)}\mathfrak{D}_{2^{l}}(\lambda,-\Delta,NA_{1})$ equals
\[\begin{cases}
                                    1-(-1)^{\frac{N}{4}+\frac{\kappa_{\bar{2}}}{2}} 2^{(\ord_{2}(\kappa)+1)(1-\frac{N}{2})}&\text{ if }N\equiv 2 \bmod 4\\
				    1-(-1)^{\frac{N}{4}}\left(\frac{1-2^{(\ord_{2}(\kappa)-1)(1-\frac{N}{2})}}{1-2^{\frac{N}{2}-1}}+2^{\ord_{2}(\kappa)(1-\frac{N}{2})}\right)&\text{ if }N\equiv 0\bmod 4
                                   \end{cases}\,.\]
Here we have written $\kappa=2^{\ord_{2}(\kappa)}\,\kappa_{\bar{2}}$.
\end{enumerate}

\end{prop}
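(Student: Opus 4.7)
The plan is to compute the representation count via Fourier analysis on $\Z/2^{l}\Z$. Using orthogonality of additive characters I would write
\[\mathfrak{D}_{2^{l}}(\lambda,-\Delta,NA_{1})=\frac{1}{2^{l}}\sum_{t \bmod 2^{l}}e^{2\pi\ii tn/2^{l}}\prod_{k=1}^{N}G_{k}(t,\lambda_{k}),\]
where each factor is a one-variable Gaussian sum
\[G_{k}(t,\lambda_{k})=\sum_{x \bmod 2^{l}}\exp\left(\frac{2\pi\ii t(x^{2}-\lambda_{k}x)}{2^{l}}\right).\]
The product decouples because the quadratic form attached to $NA_{1}$ is diagonal; this is the reason one can hope for a closed form at $p=2$ in the first place.

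Next I would classify the outer variable by $s=\ord_{2}(t)$, write $t=2^{s}t'$ with $t'$ odd, and evaluate each $G_{k}$ using the Gaussian sum formulas with half-integral shift from Berndt--Evans--Williams. These give, for $s<l$, an explicit value of the form $G_{k}(t,\lambda_{k})=2^{(l+s)/2}\,\epsilon(t',\lambda_{k})$ with $\epsilon$ a fourth root of unity, provided that a certain congruence on $\lambda_{k}$ modulo a small power of $2$ is satisfied, and $G_{k}=0$ otherwise. The three main cases of the proposition correspond precisely to different vanishing patterns of the individual factors: in (a) the mixed parities of the $\lambda_{k}$ force enough of the factors to vanish that only the trivial contribution $s=l$ survives, giving the value $1$; in (b) all shifts odd produces a uniform sign $(-1)^{-n}$; in (c) all $\lambda_{k}$ even leaves a nontrivial character sum in $t'$ which is responsible for the rich subcase structure.

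After the per-coordinate evaluation I would perform the outer sum over odd $t' \bmod 2^{l-s}$ against the character $t'\mapsto e^{2\pi\ii t' n/2^{l-s}}$. After completing the square in the combined quadratic exponent and a multiplicative change of variable, this reduces to a short finite geometric series in $s$ which can be telescoped. The split $l\leq\ord_{2}(\kappa)$ versus $l>\ord_{2}(\kappa)$, with the intermediate case $l=\ord_{2}(\kappa)+1$ treated separately, arises because the outer character is trivial on $(\Z/2^{l-s}\Z)^{\times}$ exactly when $l-s\leq\ord_{2}(n)$, and the critical transition at $s=l-\ord_{2}(\kappa)-1$ contributes a residual boundary term rather than a full power of two. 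The dichotomy $N\equiv 0$ versus $N\equiv 2\bmod 4$ enters because $\epsilon^{N}$ is a real number or a pure imaginary unit depending on $N \bmod 8$, and the $\sqrt{2}^{\,N}$ factors combine cleanly only when $N\equiv 0\bmod 4$; this produces the signs $(-1)^{N/4}$ and the extra $\kappa/2$ or $\kappa_{\bar 2}/2$ twist in the other parity.

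The main obstacle is purely combinatorial bookkeeping rather than any conceptual difficulty. Already a single Gaussian sum modulo $2^{l}$ requires separating $t$ by its $2$-adic valuation and by the residue of $t'$ modulo $8$; here we need the product of $N$ such sums together with a further outer summation, and the answer must be organised into the clean closed form stated. The case $N\equiv 2\bmod 4$ is especially painful because the fourth roots of unity do not pair into real numbers, so an extra character twist modulo $4$ survives throughout. This is exactly the calculation that the authors declined to write out in full, and while none of the steps is individually deep, verifying that all the subcases line up with the stated formula is lengthy.
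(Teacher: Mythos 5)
Your strategy is the same one the paper intends: the paper explicitly omits this proof, saying only that it rests on generalized Gaussian sums with a half-integral shift as in Berndt--Evans--Williams, and your reduction via orthogonality of additive characters,
\[\mathfrak{D}_{2^{l}}(\lambda,-\Delta,NA_{1})=\frac{1}{2^{l}}\sum_{t \bmod 2^{l}}e^{2\pi\ii tn/2^{l}}\prod_{k=1}^{N}G_{k}(t,\lambda_{k})\,,\]
together with the stratification by $s=\ord_{2}(t)$, is exactly the right frame for that computation. Your structural reading of the three cases is also sound: mixed parities of the $\lambda_{k}$ kill all terms with $s<l$, all $\lambda_{k}$ odd produces the factor $1+(-1)^{n}$, and all $\lambda_{k}$ even leaves the genuine character sum that generates the subcases in (c).

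The gap is that nothing beyond this frame is actually done. The statement consists of roughly a dozen explicit closed forms, and the proposal never evaluates a single Gauss sum, never performs the outer sum over $t'$, and never checks that any of the stated expressions (for instance the boundary value at $l=\ord_{2}(\kappa)+1$ when $N\equiv 0\bmod 4$, or the sign $(-1)^{N/4+\kappa_{\bar 2}/2}$ when $N\equiv 2\bmod 4$) actually come out of the telescoping. Moreover, the one quantitative claim you do make is off: you assert $G_{k}(t,\lambda_{k})=2^{(l+s)/2}\epsilon$, which is the odd-prime normalization. At $p=2$, after writing $t=2^{s}t'$ one has $G_{k}=2^{s}\sum_{x\bmod 2^{l-s}}e^{2\pi\ii t'(x^{2}-\lambda_{k}x)/2^{l-s}}$, and for even shift $\lambda_{k}$ the nonvanishing quadratic Gauss sum modulo $2^{m}$ has absolute value $2^{(m+1)/2}$ (and vanishes for $m=1$), while for odd shift $x^{2}-\lambda_{k}x$ is identically even and the sum degenerates differently. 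These extra half-powers of $2$ and the $m=1$ vanishing are precisely what produce the $l=1$ exception and the powers $2^{1-N/2}$, $2^{(\ord_{2}(\kappa)+1)(1-N/2)}$ in the final formulas, so getting the normalization wrong is not a cosmetic issue. As it stands this is a plausible plan of attack, coinciding with the paper's declared (but unwritten) method, rather than a proof.
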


The next theorem gives a first description of the Fourier expansion. Here we have omitted an explicit evaluation of the local density for $p=2$.

\begin{theorem}
 Let $N\geq 4$ be even and assume that $L$ equals $NA_{1}$. We put
\[\alpha_{2}=\sum_{\nu=0}^{\infty}{\frac{\mathfrak{D}_{2^{\nu}}(\lambda,-\Delta,NA_{1})}{2^{\nu(k-1)}}}\]
 The Fourier expansion of the Jacobi-Eisenstein $E_{k,1}$ is given by
\[E_{k,1}=\Theta_{S,0,0}(\tau,\mathfrak{z})+\sum_{\substack{n\in\Z\\\lambda\in
L}}{e_{k,1}(n,\lambda)e^{2\pi\ii
n\tau}e^{\pi\ii (\lambda,\mathfrak{z})}}\]
where
\[\begin{aligned}
   e_{k,1}(n,\lambda)&=&&\alpha_{2}(1-2^{N-k})\,\frac{2^{2-N/2}(-1)^{\frac{N}{4}-\frac{1}{2}}}{(\frac{k}{2}-\frac{N}{4}-\frac{1}{2})!}\,\,\frac{\sum_{a|\Delta}{\chi_{4D}(a)a^{1-\frac{N}{2}}}}{L(1+N/2-k,\chi_{4D})}\,\Delta^{k-\frac{N}{2}}&\\&\times&&\prod_{\substack{p\in\P_{>2}\\p|\Delta}}{\frac{(1-\chi_{D}(p)p^{-\frac{N}{2}})(1-p^{N-k})}{1-\chi_{D}(p)p^{\frac{N}{2}-k}}\,R_{p}(\Delta)}\,.&
  \end{aligned}\]
if $N\equiv 2 \bmod 4$ and
\[\begin{aligned}
   e_{k,1}(n,\lambda)&=&&\alpha_{2}(1-2^{N-k})\,2^{\ord_{2}(\Delta)(k-N/2-1)}\,\sigma_{k-\frac{N}{2}-1}(\Delta)\,\Delta&\\
  &\times&&\frac{\sigma_{1-N/2}(2^{\ord_{2}(\Delta)})}{\sigma_{k-\frac{N}{2}-1}(2^{\ord_{2}(\Delta)})}\,\frac{(-1)^{N/4}\,(2k-N)}{2^{k-2-N/2}B_{k-\frac{N}{2}}}&
  \end{aligned}\]
if $N\equiv 0 \bmod 4$.
The quantity $R_{p}(\Delta)$ is defined by equation (\ref{Rp fuer L=NA1}) below and 
\[\chi_{4D}(a)=\left(\frac{4D}{a}\right)\quad,\quad D=(-1)^{\frac{N}{2}}\,.\]
\end{theorem}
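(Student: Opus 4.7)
The plan is to mirror the strategy of the proof of Theorem \ref{theorem:Fourier coefficients for L even unimodular}, adapted to the non-unimodular lattice $NA_{1}$. First, by multiplicativity of $\mathfrak{D}_{a}(\lambda,-\Delta,NA_{1})$ in $a$, I would factor the Dirichlet series from Theorem \ref{theorem:Fourier expansion according to EZ} into an Euler product and isolate the factor at the prime $p=2$ as $\alpha_{2}$. Corollary \ref{cor:coefficients of D series NA1 are local densities} then allows one to replace $\mathfrak{D}_{p^{\nu}}(\lambda,-\Delta,NA_{1})$ by $A_{N}(-\Delta,p^{\nu})$ at every odd prime, so that Proposition \ref{prop:local densities for sum of N squares and p odd prime} becomes applicable and produces closed-form expressions for each odd Euler factor.

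Next I would evaluate these Euler factors separately for primes $p\nmid\Delta$ and $p\mid\Delta$. For odd $p$ coprime to $\Delta$, the stabilised value $1-\varepsilon(p,NA_{1})p^{-N/2}$ turns the factor into a geometric sum that evaluates to $(1-\varepsilon(p,NA_{1})p^{N/2-k})/(1-p^{N-k})$. Since $\varepsilon(p,NA_{1})=\chi_{4D}(p)$ with $D=(-1)^{N/2}$, taking the product over all odd $p\nmid\Delta$ contributes $\zeta(k-N)(1-2^{N-k})/L(k-N/2,\chi_{4D})$ up to the Euler factors still missing at odd primes dividing $\Delta$. For odd $p\mid\Delta$, the sum splits into a head indexed by $0\leq\nu\leq\ord_{p}(\Delta)$ and a geometric tail indexed by $\nu>\ord_{p}(\Delta)$. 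Summing both parts and reorganising factors ${1-\chi_{4D}(p)p^{\alpha}}$ produces exactly the quantity $\delta_{p}(\Delta,NA_{1})\cdot R_{p}(\Delta)$ with $R_{p}(\Delta)$ as in equation (\ref{Rp fuer L=NA1}).

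After dividing by $\zeta(k-N)$ as prescribed by Theorem \ref{theorem:Fourier expansion according to EZ}, the product over odd $p\mid\Delta$ now carries the partial product $\prod_{p\mid 2|S|}\delta_{p}(\Delta,NA_{1})$ appearing in Iwaniec's reformulation of Siegel's main theorem, Proposition \ref{Prop:reorganizationof siegels formula by Iwaniec}. This lets me trade that product for $r(\Delta,\operatorname{genus}(NA_{1}))\cdot L(N/2,\chi_{4D})/[\delta_{\infty}(\Delta,NA_{1})\sum_{a\mid\Delta}\chi_{4D}(a)a^{1-N/2}]$. Plugging in $\delta_{\infty}$ from Remark \ref{remark: siegls main theorem} and the explicit formula for $\gamma(n,\lambda)$, one obtains after elementary simplification the stated formula in the case $N\equiv 2\pmod 4$. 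In the case $N\equiv 0\pmod 4$ one has $D=1$, so $\chi_{4D}$ is the principal character modulo $4$: the divisor sum $\sum_{a\mid\Delta}\chi_{4D}(a)a^{1-N/2}$ reduces to $\sigma_{1-N/2}$ of the odd part of $\Delta$ (which accounts for the factor $\sigma_{1-N/2}(2^{\ord_{2}\Delta})/\sigma_{k-N/2-1}(2^{\ord_{2}\Delta})$), and $L(N/2,\chi_{4D})=(1-2^{-N/2})\zeta(N/2)$. Inserting $\zeta(N/2)=-(2\pi)^{N/2}B_{N/2}/(2\cdot(N/2)!)$ together with the analogous evaluation of $\zeta(k-N/2)$, exactly as in the proof of Theorem \ref{theorem:Fourier coefficients for L even unimodular}, causes the $\Gamma$-, $\pi$- and $L$-factors coming from $\gamma(n,\lambda)$ to cancel, leaving the claimed Bernoulli/$\sigma$-ratio.

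The main obstacle is the lengthy but elementary bookkeeping in the odd $p\mid\Delta$ case: one must verify that the three branches of Proposition \ref{prop:local densities for sum of N squares and p odd prime} glue together into precisely $\delta_{p}(\Delta,NA_{1})\cdot R_{p}(\Delta)$, so that the numerator of the geometric identity matches $L(k-N/2,\chi_{4D})$ on the nose at every bad prime. A secondary difficulty lies in carrying out the cancellations in the $N\equiv 0\pmod 4$ case in such a way that only the claimed $\sigma$-quotient and Bernoulli factor survive; here one has to keep careful track of the power $2^{\ord_{2}(\Delta)(k-N/2-1)}$ which originates from restricting the divisor sum to the odd part of $\Delta$ and must be balanced against the $\alpha_{2}$-factor and the $L$-value at $N/2$.
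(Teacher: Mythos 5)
Your proposal follows essentially the same route as the paper's proof: factor the Dirichlet series into an Euler product, isolate the $2$-adic factor as $\alpha_{2}$, evaluate the odd local factors via Corollary \ref{cor:coefficients of D series NA1 are local densities} and Proposition \ref{prop:local densities for sum of N squares and p odd prime} (splitting into $p\nmid\Delta$ and $p\mid\Delta$ to produce $\delta_{p}(\Delta,NA_{1})R_{p}(\Delta)$), trade the bad-prime product for $r(\Delta,\operatorname{genus}(NA_{1}))$ via Proposition \ref{Prop:reorganizationof siegels formula by Iwaniec}, and finish with the $\delta_{\infty}$, $\gamma(n,\lambda)$ and Bernoulli/zeta evaluations. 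The only step you compress is the passage from $L(k-N/2,\chi_{4D})$ to the $L(1+N/2-k,\chi_{4D})$ appearing in the stated formula for $N\equiv 2\bmod 4$, which is not mere elementary simplification but uses the functional equations of $L(s,\chi_{4D})$ and of the $\Gamma$-function, exactly as the paper spells out.
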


\begin{proof}
As in the proof of Theorem \ref{theorem:Fourier coefficients for L even unimodular} we can write the Dirichlet series which occurs in the description as an Euler product and reduce the discussion to the local factors.
We first assume that $p$ is an odd prime. If $p\nmid \Delta$ we obtain from Corollary \ref{cor:coefficients of D series NA1 are local densities} and Proposition \ref{prop:local densities for sum of N squares and p odd prime}
\[\begin{aligned}
   \sum_{\nu=0}^{\infty}{\frac{\mathfrak{D}_{p^{\nu}}(\lambda,-\Delta,NA_{1})}{p^{\nu(k-1)}}}=\sum_{\nu=0}^{\infty}{\frac{p^{\nu(1-N)}A_{N}(-\Delta,p^{\nu})}{p^{\nu(k-N)}}}&=&&1-\delta_{p}(\Delta,NA_{1})+\frac{\delta_{p}(\Delta,NA_{1})}{1-p^{N-k}}&\\&=&&\frac{1-\chi_{D}(p)\,p^{\frac{N}{2}-k}}{1-p^{N-k}}&
  \end{aligned}\]
where we have put
\[\chi_{D}(p)=\left(\frac{D}{p}\right)=\left(\frac{4D}{p}\right)\quad,\quad D=(-1)^{\frac{N}{2}}\,.\]
If $p|\Delta$ we have to take into account the formula for bad reduction given by the same Proposition which yields
\[\sum_{\nu=0}^{\infty}{\frac{\mathfrak{D}_{p^{\nu}}(\lambda,-\Delta,NA_{1})}{p^{\nu(k-1)}}}=\delta_{p}(\Delta,NA_{1})R_{p}(\Delta)\]
where
\begin{equation}\label{Rp fuer L=NA1}
\begin{aligned}
&R_{p}(\Delta)=\frac{p^{(\ord_{p}(\Delta)+1)(N-k)}}{1-p^{N-k}}&\\&+\frac{\displaystyle \frac{1-p^{(\ord_{p}(\Delta)+1)(N-k)}}{1-p^{N-k}}-\frac{1-(\chi_{D}(p)p^{1+\frac{N}{2}-k})^{\ord_{p}(\Delta)+1}}{1-\chi_{D}(p)p^{1+\frac{N}{2}-k}}}{1-\chi_{D}(p)^{1+\ord_{p}(\Delta)}p^{(\ord_{p}(\Delta)+1)(1-\frac{N}{2})}} &\\
&+\frac{(1-\chi_{D}(p)p^{1-\frac{N}{2}})[1-(\chi_{D}(p)p^{1+\frac{N}{2}-k})^{\ord_{p}(\Delta)+1}]}{(1-\chi_{D}(p)^{1+\ord_{p}(\Delta)}p^{(\ord_{p}(\Delta)+1)(1-\frac{N}{2})})(1-\chi_{D}(p)p^{-\frac{N}{2}})(1-\chi_{D}(p)p^{1+\frac{N}{2}-k})}\,.&
  \end{aligned} 
\end{equation}

If we combine these two expressions we can perform manipulations similiar to those in the proof of Theorem \ref{theorem:Fourier coefficients for L even unimodular}. This yields
\[\begin{aligned}
   \sum_{a\in\N}{\frac{\mathfrak{D}_{a}(\lambda,-\Delta,NA_{1})}{a^{k-1}}}&=&&\frac{\alpha_{2}\,(1-2^{N-k})}{\delta_{2}(\Delta,NA_{1})}\,\frac{r(\Delta,\operatorname{genus}(NA_{1}))}{\delta_{\infty}(\Delta,NA_{1})}\,\frac{\zeta(k-N)L(N/2,\chi_{4D})}{L(k-N/2,\chi_{4D})}&\\
&\times&&\prod_{\substack{p\in\P_{>2}\\p|\Delta}}{\frac{(1-\chi_{D}(p)p^{-\frac{N}{2}})(1-p^{N-k})}{1-\chi_{D}(p)p^{\frac{N}{2}-k}}\,R_{p}(\Delta)}\,.&
  \end{aligned}\]
If we apply Iwaniec's formula \ref{Iwaniec's formula} we obtain
\[\begin{aligned}
   &e_{k,1}(n,\lambda)=\alpha_{2}(1-2^{N-k})\,\frac{(-1)^{\frac{k}{2}-1}\pi^{k-\frac{N}{2}}\Delta^{k-\frac{N}{2}}}{2^{k-2}\,\Gamma(k-\frac{N}{2})}\,\,\frac{\sum_{a|\Delta}{\chi_{4D}(a)a^{1-\frac{N}{2}}}}{L(k-N/2,\chi_{4D})}&\\&\times\prod_{\substack{p\in\P_{>2}\\p|\Delta}}{\frac{(1-\chi_{D}(p)p^{-\frac{N}{2}})(1-p^{N-k})}{1-\chi_{D}(p)p^{\frac{N}{2}-k}}\,R_{p}(\Delta)}\,.&
  \end{aligned}\]
The functional equation of the classical $L$-function yields in the case where $N\equiv 2 \bmod 4$
\[L(k-N/2,\chi_{4D})=\pi^{k-\frac{N}{2}-\frac{1}{2}}\,4^{\frac{1+N-2k}{2}}\,\frac{\Gamma(1+\frac{N}{4}-\frac{k}{2})}{\Gamma(\frac{k}{2}-\frac{N}{4}+\frac{1}{2})}\,L(1+N/2-k,\chi_{4D})\,.\]
Now by the functional equation of the $\Gamma$-function we obtain
\[\Gamma\left(1+\frac{N}{4}-\frac{k}{2}\right)=\frac{\sqrt{\pi}\,2^{k-\frac{N}{2}-1}}{(-1)^{\frac{1}{2}+\frac{N}{4}-\frac{k}{2}}}\,\frac{\Gamma(\frac{k}{2}-\frac{N}{4}+\frac{1}{2})}{\Gamma(k-\frac{N}{2})}\]
So if $N\equiv 2 \bmod 4$ we have
\[\begin{aligned}
   &e_{k,1}(n,\lambda)=\alpha_{2}(1-2^{N-k})\,\frac{2^{2-N/2}(-1)^{\frac{N}{4}-\frac{1}{2}}}{(\frac{k}{2}-\frac{N}{4}-\frac{1}{2})!}\,\,\frac{\sum_{a|\Delta}{\chi_{4D}(a)a^{1-\frac{N}{2}}}}{L(1+N/2-k,\chi_{4D})}\,\Delta^{k-\frac{N}{2}}&\\&\times\prod_{\substack{p\in\P_{>2}\\p|\Delta}}{\frac{(1-\chi_{D}(p)p^{-\frac{N}{2}})(1-p^{N-k})}{1-\chi_{D}(p)p^{\frac{N}{2}-k}}\,R_{p}(\Delta)}\,.&
  \end{aligned}\]
If $N\equiv 4 \bmod 4$ the same computation as at the end of the proof of Theorem \ref{theorem:Fourier coefficients for L even unimodular} shows 
\[\begin{aligned}
   e_{k,1}(n,\lambda)&=&&\alpha_{2}(1-2^{N-k})\,2^{\ord_{2}(\Delta)(k-N/2-1)}\,\frac{\sigma_{1-N/2}(2^{\ord_{2}(\Delta)})}{\sigma_{k-\frac{N}{2}-1}(2^{\ord_{2}(\Delta)})}&\\&\times&&\frac{(-1)^{N/4}\,(2k-N)}{2^{k-2-N/2}B_{k-\frac{N}{2}}}\,\sigma_{k-\frac{N}{2}-1}(\Delta)\,\Delta\,.&
  \end{aligned}\]
\end{proof}

\end{document}